\DeclareSymbolFont{cyrletters}{OT2}{wncyr}{m}{n}
\DeclareMathSymbol{\Sha}{\mathalpha}{cyrletters}{"58}
\title[On shuffle-type functional relations of desingularized MZFs]
{On shuffle-type functional relations of desingularized multiple zeta-functions\\
}
\author{Nao Komiyama}
\address{Graduate School of Mathematics, Nagoya University, 
Furo-cho, Chikusa-ku, Nagoya 464-8602 Japan }
\email{m15027u@math.nagoya-u.ac.jp}
\date{February 18, 2020}
\newtheorem{thm}{Theorem}[section]
\newtheorem{lmm}[thm]{Lemma}
\newtheorem{prp}[thm]{Proposition}  
\theoremstyle{remark}
\keywords{multiple zeta function, desingularization}
\numberwithin{equation}{section}
\theoremstyle{definition}
\newtheorem{dfn}[thm]{Definition}
\newtheorem{rem}[thm]{Remark}
\newcommand{\Bold}[1]{\mbox{\boldmath #1}}
\begin{document}
\bibliographystyle{amsalpha+}
\maketitle

\begin{abstract}      
We treat desingularized multiple zeta-functions introduced by Furusho, Komori, Matsumoto and Tsumura. In this paper, we prove functional relations, which are shuffle type product formulae, between desingularized multiple zeta-functions and desingularized values.
\end{abstract}

\setcounter{section}{-1}
\section{Introduction}


We start with the {\it multiple zeta-function} (MZF for short), which is defined by
\begin{equation*}
	\zeta(s_1, \dots, s_r) := \sum_{0<m_1<\cdots<m_r}\frac{1}{m_1^{s_1}\cdots m_r^{s_r}}.
\end{equation*}
It converges absolutely in the region
\begin{equation*}
	\{(s_1,\dots,s_r)\in\mathbb{C}^r\ |\ \frak{R}(s_{r-k+1}+\cdots+s_r)>k\ (1\leq k\leq r)\}.
\end{equation*}
The special values of this function at $(s_1,\dots,s_r)=(k_1,\dots,k_r)$ for $k_1,\dots,k_{r-1}\geq1$ and $k_r\geq2$ are called {\it multiple zeta values} (MZVs for short). They were studied by  Euler (\cite{Euler}), Ecalle (\cite{Eca}), Hoffman (\cite{Hof}), Zagier (\cite{Zag}), etc.

 In the early 2000s, Zhao (\cite{Zhao}) and Akiyama, Egami and Tanigawa (\cite{AET}) independently showed that MZF can be meromorphically continued to $\mathbb{C}^r$. Especially, in \cite{AET}, the set of all singularities of the function $\zeta(s_1,\dots,s_r)$ is explicitly given as
	\begin{align*}
		&s_r=1,\nonumber\\
		&s_{r-1}+s_r=2,1,0,-2,-4,\dots,\\
		&s_{r-k+1}+\cdots+s_r=k-n\quad (3\leq k\leq r,\ n\in\mathbb{N}_0).\nonumber
	\end{align*}
Because almost all of non-positive integer points are located in the above singularities, we can not determine the special value of this function at these points.
Only the special values $\zeta(-k)$ (at $k\in\mathbb{N}_0$) and $\zeta(-k_1,-k_2)$ (at $k_1,k_2\in\mathbb{N}_0$ with $k_1+k_2$ odd) are well-defined.
Giving a nice definition of ``$\zeta(-k_1,\dots,-k_r)$'' for $k_1,\dots,k_r\in\mathbb{N}_0$ is one of our most fundamental problems.

In \cite{FKMT1}, Furusho, Komori, Matsumoto and Tsumura introduced the {\it desingularized MZF} $\zeta_r^{\rm des}(s_1,\dots,s_r)$ which is entire on the whole space $\mathbb{C}^r$. They proved that the functions are represented by finite linear combinations of shifted MZFs (cf. Proposition \ref{prp:1.1}). They also showed explicit formulae of special values of $\zeta_r^{\rm des}(s_1,\dots,s_r)$ at non-positive integers, which is called {\it desingularized values}, in terms of the Bernoulli numbers (see Proposition \ref{prp:1.1.1}).

In the previous paper \cite{Komi2}, 
the author showed the following sfuffle-type product formulae of desingularized values at non-positive integers which is based on a equivalence in \cite[Theorem 3.5]{Komi} between renormalized values by Ebrahimi-Fard, Manchon, Singer \cite{EMS1} and desingularized values in \cite{FKMT1}:\\

\noindent
{\bf Theorem \ref{prop:2.1}} ({\rm \cite[Theorem 3.3]{Komi2}})
{\it For $p,q\in\mathbb{N}$ and $k_1,\dots,k_p,l_1,\dots,l_q\in\mathbb{N}_0$, we have}
	\begin{align*}
		&&\zeta_p^{\rm des}(-k_1,\dots,-k_p)\zeta_q^{\rm des}(-l_1,\dots,-l_q)\hspace{6.6cm} \\
		&&= \sum_{\substack{
			i_b + j_b=l_b \\
			i_b,j_b\geq0 \\
			1\leq b \leq q}}
		\prod_{a=1}^q(-1)^{i_a}\binom{l_a}{i_a} \zeta_{p+q}^{\rm des}(-k_1,\dots,-k_{p-1},-k_p-i_1- \cdots -i_q,-j_1,\dots,-j_q). \nonumber
	\end{align*}
He also proved the following shuffle-type functional relations between desingularized MZFs and desingularized values:\\

\noindent
{\bf Proposition \ref{prop:2.2}} ({\rm \cite[Proposition 4.9]{Komi2}})
{\it For $s_1,\dots,s_{r-1}\in\mathbb{C}$ and $l\in\mathbb{N}_0$, we have}
	\begin{equation*}
		\zeta_{r-1}^{\rm des}(s_1,\dots,s_{r-1})\zeta_1^{\rm des}(-l)=\sum_{i+j=l}(-1)^i\binom{l}{i}\zeta_r^{\rm des}(s_1,\dots,s_{r-2},s_{r-1}-i,-j).
	\end{equation*}
In this paper, we show the following theorem which generalizes the above two formulae. 
\\

\noindent
{\bf Theorem \ref{cor:4.1}}
{\it For $s_1,\dots,s_p\in\mathbb{C}$ and $l_1,\dots,l_q\in\mathbb{N}_0$, we have}
	\begin{align*}
		&\zeta_p^{\rm des}(s_1,\dots,s_p)\zeta_q^{\rm des}(-l_1,\dots,-l_q)\hspace{6.6cm} \\
		&= \sum_{\substack{
			i_b + j_b=l_b \\
			i_b,j_b\geq0 \\
			1\leq b \leq q}}
		\prod_{a=1}^q(-1)^{i_a}\binom{l_a}{i_a} \zeta_{p+q}^{\rm des}(s_1,\dots,s_{p-1},s_p-i_1- \cdots -i_q,-j_1,\dots,-j_q). 
	\end{align*}

The plan of our paper goes as follows. In \S1, we will recall the definition of the desingularized MZFs and some properties of these functions. In \S2, we will show the formula (\ref{eqn:4.6}) in Theorem \ref{cor:4.1}, which is reduced to Proposition \ref{prop:2.1} and Proposition \ref{prop:2.2}.

\section{Desingularization of multiple zeta-functions}
In this section, we review desingularized MZFs desingularized values introduced by Furusho, Komori, Matsumoto and Tsumura in \cite{FKMT1}. 
We recall the definition of the desingularized MZF, and explain  some remarkable properties of their functions. 

We start with the generating function\footnote{It is denoted by $\tilde{\mathfrak{H}}_n\left((t_j);(1);c\right)$ in \cite{FKMT1}.} $\tilde{\mathfrak{H}}_r\left(t_1,\dots,t_r;c\right) \in \mathbb{C}[[t_1,\dots,t_r]]$ (cf. \cite[Definition 1.9]{FKMT1}):
\begin{align*}
	\tilde{\mathfrak{H}}_r\left(t_1,\dots,t_r;c\right)&:=\prod_{j=1}^r\left(\frac{1}{\exp{\left(\sum_{k=j}^r t_k\right)}-1}-\frac{c}{\exp{\left(c\sum_{k=j}^r t_k\right)}-1}\right)\\
	&=\prod_{j=1}^r\left(\sum_{m=1}^{\infty}(1-c^m)B_m\frac{\left(\sum_{k=j}^r t_k\right)^{m-1}}{m!}\right)
\end{align*}
for $c\in\mathbb{R}$. Here $B_m\ (m\geq0)$ is the Bernoulli number which is defined by
\begin{equation}\label{eqn:1.1.4}
\displaystyle\frac{x}{e^x-1}:=\sum_{m\geq0}\frac{B_m}{m!}x^m.
\end{equation}
We note that $B_0=1$, $B_1=-\frac{1}{2}$, $B_2=\frac{1}{6}$.
\begin{dfn}[{\rm \cite[Definition 3.1]{FKMT1}}]
	For non-integral complex numbers $s_1,\dots,s_r$, {\it desingularized MZF} $\zeta_r^{\rm des}(s_1,\dots,s_r)$ is defined by
	\begin{align}
		\label{eqn:1.1.2}&\zeta_r^{\rm des}(s_1,\dots,s_r) \\
		&:=\lim_{\substack{c\rightarrow1\\c\in\mathbb{R}\setminus\{1\}}}\frac{1}{(1-c)^r}\prod_{k=1}^r\frac{1}{(e^{2\pi is_k}-1)\Gamma(s_k)}\int_{\mathcal{C}^r}\tilde{\mathfrak{H}}_r\left(t_1,\dots,t_r;c\right)\prod_{k=1}^r t_k^{s_k-1}d t_k. \nonumber
	\end{align}
	Here $\mathcal{C}$ is the path consisting of the positive real axis (top side), a circle around the origin of radius $\varepsilon$ (sufficiently small), and the positive real axis (bottom side).
\end{dfn}
One of the remarkable properties of desingularized MZF is that it is an entire function, i.e., the equation (\ref{eqn:1.1.2}) is well-defined as an analytic function by the following proposition.
\begin{prp}[{\rm \cite[Theorem 3.4]{FKMT1}}]\label{prp:1.2}
	The function $\zeta_r^{\rm des} (s_1,\dots,s_r)$ can be analytically continued to $\mathbb{C}^r$ as an entire function in $(s_1,\dots,s_r)\in \mathbb{C}^r$ by the following integral expression:
	\begin{align*}
		\label{eqn:1.1.2}\zeta_r^{\rm des}&(s_1,\dots,s_r) 
		=\prod_{k=1}^r\frac{1}{(e^{2\pi is_k}-1)\Gamma(s_k)}\\
		&\cdot\int_{\mathcal{C}^n}\prod_{j=1}^r\lim_{\substack{c\rightarrow1\\c\in\mathbb{R}\setminus\{1\}}}\frac{1}{1-c}\left(\frac{1}{\exp{\left(\sum_{k=j}^r t_k\right)}-1}-\frac{c}{\exp{\left(c\sum_{k=j}^r t_k\right)}-1}\right)\prod_{k=1}^r t_k^{s_k-1}d t_k.
	\end{align*}
\end{prp}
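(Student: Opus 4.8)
\textbf{Proof proposal for Proposition \ref{prp:1.2}.}

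The plan is to move the limit $c\to 1$ inside the $r$-fold keyhole integral defining $\zeta_r^{\rm des}$, distributing one factor of $(1-c)^{-1}$ to each of the $r$ factors of the generating function, and then to read off entireness from the resulting integral representation by the classical Hankel-contour argument. Throughout, write $u_j:=\sum_{k=j}^r t_k$ and
\[
	g(u;c):=\frac{1}{e^{u}-1}-\frac{c}{e^{cu}-1},
\]
so that $\tilde{\mathfrak{H}}_r(t_1,\dots,t_r;c)=\prod_{j=1}^r g(u_j;c)$. The decisive structural feature, already visible in the power-series form of $\tilde{\mathfrak{H}}_r$ displayed at the start of the section, is that the two polar parts $1/u$ at $u=0$ cancel, so $g(\,\cdot\,;c)$ is holomorphic at the origin for each fixed $c$; this cancellation is exactly the ``desingularization''.

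First I would treat a single factor. Setting $h(c):=c/(e^{cu}-1)$ one has $g(u;c)=h(1)-h(c)$, whence
\[
	\frac{g(u;c)}{1-c}=\frac{h(c)-h(1)}{c-1}\longrightarrow h'(1)=\frac{1}{e^{u}-1}-\frac{u\,e^{u}}{(e^{u}-1)^2}=:G(u)\qquad(c\to1).
\]
The function $G$ is again holomorphic at $u=0$ (removable singularity) and, like $g$, decays exponentially as $u\to+\infty$ along the real axis. Moreover $(c,u)\mapsto g(u;c)/(1-c)$ extends holomorphically across $c=1$ (because $g(u;1)\equiv0$) and across $u=0$, so the convergence $g(u;c)/(1-c)\to G(u)$ is uniform for $u$ in any compact subset of a fixed neighbourhood of the contour $\mathcal{C}$. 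Taking the product over $j$ gives the pointwise identity $(1-c)^{-r}\tilde{\mathfrak{H}}_r(t_1,\dots,t_r;c)\to\prod_{j=1}^r G(u_j)$ on $\mathcal{C}^r$.

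The crux is to justify interchanging $\lim_{c\to1}$ with $\int_{\mathcal{C}^r}$, for which I would use dominated convergence. One must produce a single integrable majorant for $\big|(1-c)^{-r}\tilde{\mathfrak{H}}_r\prod_k t_k^{s_k-1}\big|$ valid uniformly for $c$ in a punctured neighbourhood of $1$, say $c\in(\tfrac12,\tfrac32)\setminus\{1\}$. Near the origin the holomorphy of each $g(u_j;c)/(1-c)$ in $(c,u_j)$ gives a uniform bound, so the small circle and the initial segments of the rays pose no problem; along the rays to $+\infty$ the real difficulty is a uniform exponential-decay estimate for each factor, which then has to survive the nesting $u_j=\sum_{k\ge j}t_k$ that couples all the variables. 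I expect this uniform tail bound---controlling $\prod_j g(u_j;c)/(1-c)$ simultaneously as all $t_k\to\infty$ and as $c\to1$---to be the main obstacle; the weight $t_k^{\mathfrak{R}(s_k)-1}$ grows only polynomially and is beaten once the decay is established uniformly. With the majorant in hand, dominated convergence yields
\[
	\lim_{c\to1}\frac{1}{(1-c)^r}\int_{\mathcal{C}^r}\tilde{\mathfrak{H}}_r\prod_{k=1}^r t_k^{s_k-1}\,dt_k=\int_{\mathcal{C}^r}\prod_{j=1}^r G(u_j)\prod_{k=1}^r t_k^{s_k-1}\,dt_k,
\]
which, after restoring the $c$-independent prefactors $\prod_k \big((e^{2\pi i s_k}-1)\Gamma(s_k)\big)^{-1}$, is precisely the asserted integral expression with the limit taken factor by factor inside.

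Finally I would deduce entireness from this representation. Since each $G(u_j)$ is holomorphic along and near $\mathcal{C}$ and decays exponentially at $+\infty$, I can invoke the classical fact that, for such an integrand, the one-variable keyhole integral $\int_{\mathcal{C}}(\cdots)\,t_k^{s_k-1}\,dt_k$ is entire in $s_k$ and the prefactor $1/\big((e^{2\pi i s_k}-1)\Gamma(s_k)\big)$ is harmless: at non-positive integers the zero of $1/\Gamma(s_k)$ cancels the simple pole of $1/(e^{2\pi i s_k}-1)$, while at positive integers $s_k$ the weight $t_k^{s_k-1}$ is single-valued, the two rays cancel, and the residual circle integral of a holomorphic function vanishes, producing a zero of the Hankel integral that cancels the remaining pole. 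Applying this in every variable (treating the others as holomorphic parameters) and combining by separate analyticity shows the right-hand side is entire on $\mathbb{C}^r$, and it agrees with $\zeta_r^{\rm des}$ at every non-integral point where the defining limit exists, which gives the desired analytic continuation.
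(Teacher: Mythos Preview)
The paper does not supply its own proof of this proposition: it is quoted verbatim from \cite[Theorem~3.4]{FKMT1} in the review section (\S1), so there is no in-paper argument to compare against.

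That said, your outline is the standard route and is correct. Your computation of the factorwise limit $G(u)=h'(1)$ is right, and your identification of the uniform exponential tail bound as ``the main obstacle'' is exactly on target: that bound is precisely \cite[Lemma~3.6]{FKMT1}, namely $|g(y;c)/(1-c)|<A\,e^{-y/2}$ for all $y>0$ with $A$ independent of $c$ near $1$, which the present paper in fact quotes and uses later in the proof of Lemma~\ref{lmm:4.3}. With that lemma in hand, the dominated-convergence step goes through (the nesting $u_j=\sum_{k\ge j}t_k$ causes no trouble, since $\prod_j e^{-u_j/2}=\exp(-\tfrac12\sum_k k\,t_k)$ decays in every $t_k$), and your Hankel-contour discussion of entireness is the classical one. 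So your proposal matches what one would expect the original proof in \cite{FKMT1} to do; the only substantive gap you leave open is the uniform bound, and that is supplied by the lemma just cited.
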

For indeterminates $u_j$ and $v_j\ (1\leq j\leq r)$, we set
\begin{equation}\label{eqn:1.1.3}
	\mathcal{G}_r(u_1,\dots,u_r; v_1,\dots,v_r):=\prod_{j=1}^r\left\{1-(u_jv_j+\cdots+u_r v_r)(v_j^{-1}-v_{j-1}^{-1})\right\}
\end{equation}
with the convention $v_0^{-1}:=0$, and we define the set of integers $\{a^r_{\Bold{\footnotesize$l$},\Bold{\footnotesize$m$}}\}$ by
\begin{equation}\label{eqn:1.1.4}
	\mathcal{G}_r(u_1,\dots,u_r; v_1,\dots,v_r)=\sum_{\substack{\mbox{\boldmath {\footnotesize$l$}}=(l_j)\in\mathbb{N}_0^r\\ \mbox{\boldmath {\footnotesize$m$}}=(m_j)\in\mathbb{Z}^r \\ |\mbox{\boldmath {\footnotesize$m$}}|=0}}a^r_{\mbox{\boldmath {\footnotesize$l$}},\mbox{\boldmath {\footnotesize$m$}}}\prod_{j=1}^ru_j^{l_j}v_j^{m_j}.
\end{equation}
Here, $|\mbox{\boldmath {\footnotesize$m$}}|:=m_1+\cdots+ m_r$.\\
Another remarkable properties of desingularized MZF is that the function is given by a finite ``linear'' combination of shifted MZFs, i.e.,
\begin{prp}[{\rm \cite[Theorem 3.8]{FKMT1}}]\label{prp:1.1}
	For $s_1,\dots,s_r \in \mathbb{C}$, we have the following equality between meromorphic functions of the complex variables $(s_1,\ldots,s_r)$:
	\begin{equation}\label{eqn:1.1.5}
		\zeta_r^{\rm des}(s_1,\dots,s_r)=\sum_{\substack{\mbox{\boldmath {\footnotesize$l$}}=(l_j)\in\mathbb{N}_0^r\\ \mbox{\boldmath {\footnotesize$m$}}=(m_j)\in\mathbb{Z}^r \\ |\mbox{\boldmath {\footnotesize$m$}}|=0}}a^r_{\mbox{\boldmath {\footnotesize$l$}},\mbox{\boldmath {\footnotesize$m$}}}\left(\prod_{j=1}^r(s_j)_{l_j}\right)\zeta(s_1+m_1,\dots,s_r+m_r).
	\end{equation}
	Here, $(s)_{k}$ is the {\it Pochhammer symbol}, that is, for $k\in\mathbb{N}$ and $s\in\mathbb{C}$ $(s)_{0}:=1$ and $(s)_k:=s(s+1)\cdots(s+k-1)$.
\end{prp}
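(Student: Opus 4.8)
The plan is to begin from the integral representation in Proposition~\ref{prp:1.2}, evaluate the inner limit $c\to1$ explicitly, and then match the resulting integrand against the generating function $\mathcal{G}_r$ from (\ref{eqn:1.1.3}). Write $T_j:=\sum_{k=j}^r t_k$. Using the Bernoulli expansion already recorded in the definition of $\tilde{\mathfrak{H}}_r$ together with $\tfrac{1-c^m}{1-c}=1+c+\cdots+c^{m-1}\to m$ as $c\to1$, the factor appearing under the product sign in Proposition~\ref{prp:1.2} becomes
\[
\lim_{\substack{c\to1\\c\in\mathbb{R}\setminus\{1\}}}\frac{1}{1-c}\left(\frac{1}{e^{T_j}-1}-\frac{c}{e^{cT_j}-1}\right)=\sum_{m\geq1}\frac{B_m}{(m-1)!}T_j^{m-1}=\frac{d}{dT_j}\left(\frac{T_j}{e^{T_j}-1}\right).
\]
Expanding $\tfrac{T}{e^T-1}=\sum_{n\geq1}Te^{-nT}$ and differentiating termwise rewrites this limit as $\sum_{n\geq1}(1-nT_j)e^{-nT_j}$, which is the form that exposes the Dirichlet-series structure needed below.

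Substituting this into Proposition~\ref{prp:1.2} and expanding the $r$-fold product, I set $N_k:=n_1+\cdots+n_k$, so that $\sum_{j}n_jT_j=\sum_k N_kt_k$ and the tuples $(n_1,\dots,n_r)$ with $n_j\geq1$ correspond bijectively to $1\leq N_1<\cdots<N_r$. The integrand then reads
\[
\prod_{j=1}^r\left(\sum_{n_j\geq1}(1-n_jT_j)e^{-n_jT_j}\right)=\sum_{1\leq N_1<\cdots<N_r}\left(\prod_{j=1}^r(1-n_jT_j)\right)e^{-\sum_k N_kt_k}.
\]
The central step is to identify the polynomial prefactor with $\mathcal{G}_r$. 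Since $n_j=N_j-N_{j-1}$ (with $N_0=0$) and $T_j=\sum_{k\geq j}t_k$, the substitution $u_k\mapsto N_kt_k$, $v_k\mapsto N_k^{-1}$ sends the $j$-th factor of (\ref{eqn:1.1.3}) to $1-n_jT_j$: indeed $u_kv_k=t_k$, so $u_jv_j+\cdots+u_rv_r=T_j$, while $v_j^{-1}-v_{j-1}^{-1}=N_j-N_{j-1}=n_j$, and the convention $v_0^{-1}=0$ matches $N_0=0$. Hence $\prod_j(1-n_jT_j)=\mathcal{G}_r(N_1t_1,\dots,N_rt_r;N_1^{-1},\dots,N_r^{-1})$, and the defining expansion (\ref{eqn:1.1.4}) turns this into $\sum_{l,m}a^r_{l,m}\prod_j N_j^{l_j-m_j}t_j^{l_j}$, where $l=(l_j)$ and $m=(m_j)$.

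Next I would interchange the finite sum over $(l,m)$ and the sum over $N$ with the contour integral, and evaluate each one-dimensional Hankel integral by $\int_{\mathcal{C}}t^{s+l-1}e^{-Nt}\,dt=(e^{2\pi is}-1)\Gamma(s+l)N^{-(s+l)}$, which holds for $l\in\mathbb{N}_0$ since then $e^{2\pi i(s+l)}=e^{2\pi is}$. The powers of $N_j$ combine as $N_j^{l_j-m_j}\cdot N_j^{-(s_j+l_j)}=N_j^{-(s_j+m_j)}$; the quotient $\Gamma(s_j+l_j)/\Gamma(s_j)=(s_j)_{l_j}$ supplies the Pochhammer symbol after cancelling against the prefactor $\prod_k\{(e^{2\pi is_k}-1)\Gamma(s_k)\}^{-1}$; and the surviving multiple series $\sum_{N_1<\cdots<N_r}\prod_j N_j^{-(s_j+m_j)}$ is exactly $\zeta(s_1+m_1,\dots,s_r+m_r)$. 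Assembling these identifications yields precisely (\ref{eqn:1.1.5}).

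I expect the main obstacle to be justifying the termwise operations — differentiating the series for the desingularized factor and, above all, interchanging the infinite summation over $(n_1,\dots,n_r)$ with the contour integration. I would therefore first establish the identity on the region where $\Re(s_k)$ is large enough that, for every one of the finitely many shift vectors $m$ with $a^r_{l,m}\neq0$, the series $\sum_{N_1<\cdots<N_r}\prod_j N_j^{-(s_j+m_j)}$ converges absolutely; absolute convergence there legitimizes all the rearrangements and interchanges. Since $\mathcal{G}_r$ is a (Laurent) polynomial, only finitely many $a^r_{l,m}$ are nonzero, so the right-hand side of (\ref{eqn:1.1.5}) is a finite combination of the meromorphic functions $\zeta(s_1+m_1,\dots,s_r+m_r)$, while the left-hand side is entire by Proposition~\ref{prp:1.2}. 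As both sides are meromorphic and agree on a nonempty open subset of $\mathbb{C}^r$, they coincide on all of $\mathbb{C}^r$ by analytic continuation, which gives the asserted equality of meromorphic functions.
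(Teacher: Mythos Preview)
Your argument is sound: the computation of the $c\to1$ limit via the Bernoulli expansion, the rewriting as $\sum_{n\geq1}(1-nT)e^{-nT}$, the identification of the polynomial prefactor with $\mathcal{G}_r(N_1t_1,\dots,N_rt_r;N_1^{-1},\dots,N_r^{-1})$ through the substitution $u_k=N_kt_k$, $v_k=N_k^{-1}$, and the evaluation of the Hankel integrals are all correct, and the analytic continuation step at the end is the right way to pass from the region of absolute convergence to all of $\mathbb{C}^r$.

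There is, however, nothing to compare against in this paper: Proposition~\ref{prp:1.1} is quoted here without proof as Theorem~3.8 of \cite{FKMT1}, and the present paper does not reproduce or sketch that argument. Your derivation is in fact the natural proof of the cited theorem---starting from the contour-integral representation, expanding the desingularized generating function, and matching against the definition of the coefficients $a^r_{l,m}$---so it is consistent with the structure of the result as stated, but a line-by-line comparison with the original proof in \cite{FKMT1} is outside the scope of what this paper provides.
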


\begin{dfn}\label{def:1.2.1}
	For $k_1,\dots,k_r \in \mathbb{N}_0$, {\it desingularized value} $\zeta_r^{\rm des}(-k_1,\dots,-k_r)\in\mathbb{C}$ is defined to be the special value of desingularized MZF $\zeta_r^{\rm des}(s_1,\dots,s_r)$ at $(s_1,\dots,s_r)=(-k_1,\dots,-k_r)$.
\end{dfn}
We consider the following generating function $Z_{\scalebox{0.5}{\rm FKMT}}(t_1,\dots,t_r)$ of $\zeta_r^{\rm des}(-k_1,\dots,-k_r)$ which is defined by
\begin{equation*}
	Z_{\scalebox{0.5}{\rm FKMT}}(t_1,\dots,t_r) := \sum_{k_1,\dots,k_r=0}^{\infty}\frac{(-t_1)^{k_1}\cdots(-t_r)^{k_r}}{k_1!\cdots k_r!}\zeta_r^{\rm des}(-k_1,\dots,-k_r).
\end{equation*}
This is explicitly calculated as follows.
\begin{prp}[{\rm \cite[Theorem 3.7]{FKMT1}}]\label{prp:1.1.1}
	We have
	\begin{equation*}
		Z_{\scalebox{0.5}{\rm FKMT}}(t_1,\dots,t_r) = \prod_{i=1}^r\frac{(1-t_i-\cdots-t_r)e^{t_i+\cdots+t_r}-1}{(e^{t_i+\cdots+t_r}-1)^2}.
	\end{equation*}
	In terms of $\zeta_r^{\rm des}(-k_1,\dots,-k_r)$ for $k_1,\dots,k_r\in\mathbb{N}_0$, the above equation is reformulated to
	\begin{equation*}
		\zeta_r^{\rm des}(-k_1,\dots,-k_r)=(-1)^{k_1+\cdots+k_r}\sum_{\substack{\nu_{1i}+\cdots+\nu_{ii}=k_i\\1\leq i\leq r}}\prod_{i=1}^r\frac{k_i!}{\prod_{j=i}^r\nu_{ij}!}B_{\nu_{ii}+\cdots+\nu_{ir}+1}.
	\end{equation*}
\end{prp}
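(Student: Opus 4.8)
The plan is to work from the integral representation in Proposition~\ref{prp:1.2} and reduce the whole statement to a single coefficient-extraction computation on the Hankel contour. First I would evaluate the inner limit factor by factor. Writing $X_j := t_j+\cdots+t_r$ and $g(c):=\frac{c}{e^{cX_j}-1}$, the normalization $\frac{1}{e^{X_j}-1}=g(1)$ turns each factor into a difference quotient, so that
\[
\lim_{\substack{c\to1\\ c\in\mathbb{R}\setminus\{1\}}}\frac{1}{1-c}\left(\frac{1}{e^{X_j}-1}-\frac{c}{e^{cX_j}-1}\right)=g'(1)=\frac{(1-X_j)e^{X_j}-1}{(e^{X_j}-1)^2}=:H(X_j).
\]
A short expansion at $X_j=0$ shows that $H$ is holomorphic there (the double pole of $(e^{X_j}-1)^{-2}$ cancels against the double zero of the numerator), which is the analytic manifestation of desingularization and guarantees that the integrand $\prod_{j=1}^r H(X_j)$ is holomorphic near the origin and decays along $[0,\infty)$.

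Next I would invoke the standard Hankel-contour principle: for $F$ holomorphic near $[0,\infty)$ with suitable decay, the quantity $\frac{1}{(e^{2\pi is}-1)\Gamma(s)}\int_{\mathcal{C}}t^{s-1}F(t)\,dt$ specializes at $s=-k$ ($k\in\mathbb{N}_0$) to $(-1)^k k!$ times the Taylor coefficient $[t^k]F(t)$; this is exactly the computation giving $\zeta(-k)=(-1)^k B_{k+1}/(k+1)$ in one variable. Applying it in each variable to Proposition~\ref{prp:1.2} yields
\[
\zeta_r^{\rm des}(-k_1,\dots,-k_r)=(-1)^{k_1+\cdots+k_r}\left(\prod_{i=1}^r k_i!\right)\bigl[t_1^{k_1}\cdots t_r^{k_r}\bigr]\prod_{j=1}^r H(X_j).
\]
Substituting this into the definition of $Z_{\scalebox{0.5}{\rm FKMT}}$, the signs $(-1)^{k_i}$ cancel against those in $(-t_i)^{k_i}$ and the factorials $k_i!$ cancel against $1/k_i!$, so the series telescopes to $\sum_{k_1,\dots,k_r} t_1^{k_1}\cdots t_r^{k_r}\,[t_1^{k_1}\cdots t_r^{k_r}]\prod_j H(X_j)=\prod_{j=1}^r H(X_j)$, which is the asserted product formula.

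For the Bernoulli reformulation I would expand the single-variable kernel. From $\frac{cX}{e^{cX}-1}=\sum_{m\ge0}\frac{B_m}{m!}(cX)^m$ one reads off $g(c)=\sum_{m\ge0}\frac{B_m}{m!}c^m X^{m-1}$, hence $H(X)=g'(1)=\sum_{\nu\ge0}\frac{B_{\nu+1}}{\nu!}X^\nu$. Expanding each $X_i^{\nu_i}=(t_i+\cdots+t_r)^{\nu_i}$ by the multinomial theorem as $\sum_{\nu_{ii}+\cdots+\nu_{ir}=\nu_i}\frac{\nu_i!}{\prod_{j=i}^r\nu_{ij}!}\prod_{j=i}^r t_j^{\nu_{ij}}$ cancels the factors $\nu_i!$, so $\prod_j H(X_j)$ becomes one sum over the array $\{\nu_{ij}\}_{1\le i\le j\le r}$ with weight $\prod_{i=1}^r\frac{B_{\nu_{ii}+\cdots+\nu_{ir}+1}}{\prod_{j=i}^r\nu_{ij}!}$. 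Since the factor $H(X_i)$ contributes to $t_j$ only when $i\le j$, extracting $[t_1^{k_1}\cdots t_r^{k_r}]$ imposes exactly the constraints $\nu_{1j}+\cdots+\nu_{jj}=k_j$; reinstating the prefactor $(-1)^{\sum k_i}\prod_i k_i!$ then gives the displayed closed form.

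The main obstacle will be the rigorous justification of the coefficient-extraction step: one must establish convergence of the Hankel-contour integral, justify moving the limit $c\to1$ inside the integral so that Proposition~\ref{prp:1.2} applies with the limit already taken, and resolve the apparent $0\cdot\infty$ coming from the poles of $\Gamma(s_k)$ against the zeros of $e^{2\pi is_k}-1$ at $s_k=-k_k$. All of this becomes tractable precisely because $\prod_j H(X_j)$ is holomorphic at the origin; the remaining multinomial expansion and index bookkeeping are then routine.
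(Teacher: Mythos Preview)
The paper does not actually prove this proposition: it is quoted verbatim from \cite[Theorem~3.7]{FKMT1} and no argument is supplied here, so there is no ``paper's own proof'' to compare against. Your sketch is nevertheless a correct derivation. The identification of the $c\to1$ limit of each factor with the derivative $g'(1)=\frac{(1-X_j)e^{X_j}-1}{(e^{X_j}-1)^2}$ is exactly right, the Hankel--contour coefficient extraction at $s_k=-k_k$ is the standard mechanism (and your description of why the prefactor $\frac{1}{(e^{2\pi is_k}-1)\Gamma(s_k)}$ yields $(-1)^{k_k}k_k!$ at a non-positive integer is accurate), and the Bernoulli expansion $H(X)=\sum_{\nu\ge0}\frac{B_{\nu+1}}{\nu!}X^\nu$ followed by the multinomial bookkeeping recovers the displayed closed form with the correct constraint $\nu_{1j}+\cdots+\nu_{jj}=k_j$.

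The only point worth tightening is the one you already flag: the coefficient-extraction step requires that the integrand be holomorphic in a neighbourhood of the origin and decay along the real axis so that at $s_k=-k_k$ the straight-line pieces of $\mathcal{C}$ cancel and only the residue survives. Holomorphy at $0$ follows from your observation that each $H(X_j)$ is regular there; for the decay you should note that $H(X)$ is $O(Xe^{-X})$ as $X\to+\infty$, which makes the integral absolutely convergent for all $s$ and justifies specializing $s_k\to -k_k$ by continuity. With that remark added, the argument is complete; it is in all likelihood the same computation carried out in \cite{FKMT1}, since it is the most direct route from Proposition~\ref{prp:1.2} to the generating-function identity.
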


We have the following shuffle-type product formulae of desingularized values at non-positive integer points.
\begin{thm}[{\rm \cite[Theorem 3.3]{Komi2}}]\label{prop:2.1}
For $p,q\in\mathbb{N}$ and $k_1,\dots,k_p,l_1,\dots,l_q\in\mathbb{N}_0$, we have
	\begin{align}\label{eqn:1.2}
		&&\zeta_p^{\rm des}(-k_1,\dots,-k_p)\zeta_q^{\rm des}(-l_1,\dots,-l_q)\hspace{6.6cm} \\
		&&= \sum_{\substack{
			i_b + j_b=l_b \\
			i_b,j_b\geq0 \\
			1\leq b \leq q}}
		\prod_{a=1}^q(-1)^{i_a}\binom{l_a}{i_a} \zeta_{p+q}^{\rm des}(-k_1,\dots,-k_{p-1},-k_p-i_1- \cdots -i_q,-j_1,\dots,-j_q). \nonumber
	\end{align}
\end{thm}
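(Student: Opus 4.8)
The plan is to reduce the identity to an elementary equality between the generating functions of desingularized values, for which the explicit product formula of Proposition \ref{prp:1.1.1} is tailor-made. Writing $F(x):=\frac{(1-x)e^x-1}{(e^x-1)^2}$, that proposition says $Z_{\scalebox{0.5}{\rm FKMT}}(t_1,\dots,t_r)=\prod_{i=1}^r F(t_i+\cdots+t_r)$, so by Definition \ref{def:1.2.1} the value $\zeta_r^{\rm des}(-k_1,\dots,-k_r)$ is recovered, up to the factor $(-1)^{k_1+\cdots+k_r}k_1!\cdots k_r!$, as the coefficient of $t_1^{k_1}\cdots t_r^{k_r}$ in this product. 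The strategy is therefore to encode both sides of (\ref{eqn:1.2}) in variables $a_1,\dots,a_p$ (attached to $-k_1,\dots,-k_p$) and $b_1,\dots,b_q$ (attached to $-l_1,\dots,-l_q$), and to verify that the two resulting power series coincide; comparing coefficients then yields (\ref{eqn:1.2}).

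First I would record that the generating series of the left-hand side is simply the product $Z_{\scalebox{0.5}{\rm FKMT}}(a_1,\dots,a_p)\,Z_{\scalebox{0.5}{\rm FKMT}}(b_1,\dots,b_q)$. The main computation is to identify the generating series $G$ of the right-hand side. Here I would use two elementary simplifications: $\tfrac{1}{l_b!}\binom{l_b}{i_b}=\tfrac{1}{i_b!\,j_b!}$ absorbs the binomial coefficients, and $(-1)^{i_b}(-b_b)^{i_b}=b_b^{i_b}$ disposes of the signs, so that the constraint $i_b+j_b=l_b$ dissolves and the $i_b,j_b$ become free summation indices. Since the $p$-th argument $-k_p-i_1-\cdots-i_q$ depends on $k_p$ and the $i_b$ only through their sum $N:=k_p+i_1+\cdots+i_q$, the multinomial theorem collapses the inner convolution of $\tfrac{(-a_p)^{k_p}}{k_p!}\prod_b\tfrac{b_b^{i_b}}{i_b!}$ into $\tfrac{1}{N!}(-a_p+b_1+\cdots+b_q)^N$. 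After relabelling the surviving indices, this yields the closed form
\[
G=Z_{\scalebox{0.5}{\rm FKMT}}\bigl(a_1,\dots,a_{p-1},\,a_p-b_1-\cdots-b_q,\,b_1,\dots,b_q\bigr),
\]
i.e. the generating series of the right-hand side is nothing but $Z_{\scalebox{0.5}{\rm FKMT}}$ evaluated on the $(p+q)$-tuple obtained by inserting $b_1,\dots,b_q$ after the $p$-th slot and correcting that slot by $-b_1-\cdots-b_q$.

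It then remains to verify the formal identity
\[
Z_{\scalebox{0.5}{\rm FKMT}}(a_1,\dots,a_p)\,Z_{\scalebox{0.5}{\rm FKMT}}(b_1,\dots,b_q)=Z_{\scalebox{0.5}{\rm FKMT}}\bigl(a_1,\dots,a_{p-1},a_p-b_1-\cdots-b_q,b_1,\dots,b_q\bigr),
\]
and this is where the product shape of Proposition \ref{prp:1.1.1} does all the work. For the tuple on the right, the partial sum of the last entries from position $i$ equals $a_i+\cdots+a_p$ when $i\le p$ (the correction $-b_1-\cdots-b_q$ cancels against the trailing $b_1+\cdots+b_q$) and equals $b_{i-p}+\cdots+b_q$ when $i>p$. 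Hence each factor $F(\cdot)$ on the right matches exactly one factor on the left, and the two products are literally equal. Comparing the coefficients of $\prod_a\tfrac{(-a_a)^{k_a}}{k_a!}\prod_b\tfrac{(-b_b)^{l_b}}{l_b!}$ on both sides then produces (\ref{eqn:1.2}).

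I expect the only real obstacle to be bookkeeping rather than conceptual: one must keep the sign and factorial conventions of $Z_{\scalebox{0.5}{\rm FKMT}}$ straight through the reindexing $(k_p,i_1,\dots,i_q)\mapsto N$, and make the substitution $a_p\mapsto a_p-b_1-\cdots-b_q$ rigorous as an identity of formal power series (equivalently, of analytic functions near the origin, where $a_p-b_1-\cdots-b_q\to 0$), so that the final coefficient extraction is justified. Once the change of variables is in place, the verification via partial sums is immediate, which is precisely what makes Proposition \ref{prp:1.1.1} the decisive ingredient and keeps the argument self-contained within the present section.
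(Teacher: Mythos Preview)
Your argument is correct. The generating-function manipulation is sound: the binomial/factorial bookkeeping reduces the right-hand side to $Z_{\scalebox{0.5}{\rm FKMT}}$ evaluated at the shifted tuple, and the telescoping of partial sums verifies the claimed product identity immediately from Proposition~\ref{prp:1.1.1}. The only point worth making explicit is that the substitution $a_p\mapsto a_p-b_1-\cdots-b_q$ is legitimate because $Z_{\scalebox{0.5}{\rm FKMT}}$ is an honest analytic function near the origin, so the power-series identity follows from the functional identity.

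Your route, however, is genuinely different from both proofs on record. The paper does not prove Theorem~\ref{prop:2.1} directly: it is quoted from \cite{Komi2}, where it was obtained via the equivalence between desingularized values and the Ebrahimi--Fard--Manchon--Singer renormalized values. The present paper then re-derives it as the special case $s_j=-k_j$ of Theorem~\ref{cor:4.1}, whose proof passes through the Mellin--Barnes machinery of Lemmas~\ref{lmm:4.1}--\ref{lmm:4.3} and Proposition~\ref{prp:4.1}, followed by residue extraction in Proposition~\ref{thm:4.1} and the inversion Lemma~\ref{lem:4.2}. Your argument bypasses all of this: it uses only the closed form of Proposition~\ref{prp:1.1.1} and elementary power-series algebra. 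What you gain is a self-contained, one-page proof requiring no analytic continuation arguments; what you lose is generality, since your method is tied to integer arguments and cannot reach the functional relation of Theorem~\ref{cor:4.1} with complex $s_1,\dots,s_p$.
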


\begin{rem}
The desingularized value $\zeta_r^{\rm des}(-k_1,\dots,-k_r)$ satisfies the same shuffle-type product formula to $\zeta_{\scalebox{0.5}{\rm EMS}}(-k_1,\dots,-k_r)$ introduced in \cite{EMS1}.
\end{rem}

In \cite{Komi2}, the author generalize the above proposition to the following.
\begin{prp}[{\rm \cite[Proposition 4.9]{Komi2}}]\label{prop:2.2}
	For $s_1,\dots,s_{r-1}\in\mathbb{C}$ and $l\in\mathbb{N}_0$, we have
	\begin{equation*}\label{eqn:4.1}
		\zeta_{r-1}^{\rm des}(s_1,\dots,s_{r-1})\zeta_1^{\rm des}(-l)=\sum_{i+j=l}(-1)^i\binom{l}{i}\zeta_r^{\rm des}(s_1,\dots,s_{r-2},s_{r-1}-i,-j).
	\end{equation*}
\end{prp}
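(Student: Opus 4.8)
The plan is to prove the identity directly from the contour-integral representation of Proposition~\ref{prp:1.2}, keeping $l$ fixed so that every sum that occurs is finite. Write $g(x):=\frac{(1-x)e^{x}-1}{(e^{x}-1)^{2}}$ for the desingularized one-variable factor $\lim_{c\to1}\frac{1}{1-c}\bigl(\frac{1}{e^{x}-1}-\frac{c}{e^{cx}-1}\bigr)$ of Proposition~\ref{prp:1.2}, so that $\zeta_{m}^{\rm des}(\sigma_{1},\dots,\sigma_{m})=\prod_{k=1}^{m}\frac{1}{(e^{2\pi i\sigma_{k}}-1)\Gamma(\sigma_{k})}\int_{\mathcal{C}^{m}}\prod_{j=1}^{m}g\bigl(\sum_{k=j}^{m}t_{k}\bigr)\prod_{k=1}^{m}t_{k}^{\sigma_{k}-1}\,dt_{k}$. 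The first ingredient I would record is the Hankel-integral evaluation at a non-positive integer underlying the entireness in Proposition~\ref{prp:1.2}: if $F$ is holomorphic at $0$ and decays along $\mathbb{R}_{>0}$, then $\frac{1}{(e^{2\pi is}-1)\Gamma(s)}\int_{\mathcal{C}}F(t)t^{s-1}\,dt$ is entire in $s$ and equals $(-1)^{j}j!\,[t^{j}]F(t)$ at $s=-j$. Taking $m=1$ and $F=g$ gives $\zeta_{1}^{\rm des}(-l)=(-1)^{l}l!\,[t^{l}]g(t)$ (equivalently Proposition~\ref{prp:1.1.1}), the form of the second left-hand factor I will use.

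I would then apply this evaluation to the last slot $s_{r}=-j$ of each summand $\zeta_{r}^{\rm des}(s_{1},\dots,s_{r-2},s_{r-1}-i,-j)$. As a function of $t_{r}$ the integrand is $F(t_{r})=g(t_{r})\prod_{j'=1}^{r-1}g\bigl(t_{r}+\sum_{k=j'}^{r-1}t_{k}\bigr)$, which is \emph{independent of} $i$; hence each summand equals $(-1)^{j}j!\,[t_{r}^{j}]F$ times an integral over $\mathcal{C}^{r-1}$ whose only $i$-dependence is the factor $t_{r-1}^{s_{r-1}-i-1}/\Gamma(s_{r-1}-i)$ (the exponential factor is inert, since $e^{2\pi i(s_{r-1}-i)}-1=e^{2\pi is_{r-1}}-1$). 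Using $\frac{1}{\Gamma(s_{r-1}-i)}=\frac{i!}{\Gamma(s_{r-1})}\binom{s_{r-1}-1}{i}$ and a bookkeeping variable $t$ recording the index $i$ by $t^{i}$ and the index $j$ by $t^{j}$, the constraint $i+j=l$ turns into extraction of $[t^{l}]$, and $\sum_{i+j=l}(-1)^{i}\binom{l}{i}(\cdots)$ collapses, up to the scalar $(-1)^{l}l!$ and the prefactor $(\mathrm{pref}):=\prod_{k=1}^{r-1}\frac{1}{(e^{2\pi is_{k}}-1)\Gamma(s_{k})}$, into $[t^{l}]$ of $\int_{\mathcal{C}^{r-1}}(t_{r-1}+t)^{s_{r-1}-1}g(t)\prod_{j'=1}^{r-1}g\bigl(t+\sum_{k=j'}^{r-1}t_{k}\bigr)\prod_{k=1}^{r-2}t_{k}^{s_{k}-1}\,dt_{k}$. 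Since only the coefficient of $t^{l}$ is needed, this is a purely algebraic identity among Taylor coefficients and no analytic estimate is lost here.

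It remains to match this with the left-hand side. Writing $J(t):=\int_{\mathcal{C}^{r-1}}(t_{r-1}+t)^{s_{r-1}-1}\prod_{j'=1}^{r-1}g\bigl(t+\sum_{k=j'}^{r-1}t_{k}\bigr)\prod_{k=1}^{r-2}t_{k}^{s_{k}-1}\,dt_{k}$ and factoring out $g(t)$, the right-hand side becomes $(-1)^{l}l!\,(\mathrm{pref})\,[t^{l}]\bigl(g(t)J(t)\bigr)$. On the other hand $J(0)=\zeta_{r-1}^{\rm des}(s_{1},\dots,s_{r-1})/(\mathrm{pref})$, so by the formula for $\zeta_{1}^{\rm des}(-l)$ above the left-hand side is $(-1)^{l}l!\,(\mathrm{pref})\,[t^{l}]\bigl(g(t)J(0)\bigr)$. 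Hence the whole statement reduces to the single assertion $[t^{l}]\bigl(g(t)(J(t)-J(0))\bigr)=0$, which I expect to be the main obstacle.

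To settle it I would substitute $t_{r-1}\mapsto t_{r-1}+t$, which rewrites $J(t)$ as the integral of $u^{s_{r-1}-1}\prod_{j'=1}^{r-1}g\bigl(u+\sum_{k=j'}^{r-2}t_{k}\bigr)$ over the shifted contour $\mathcal{C}+t$, while $J(0)$ is the same integrand over $\mathcal{C}$; thus $J(t)-J(0)$ is the difference of a Hankel integral around $t$ and one around $0$. Deforming one contour into the other crosses the branch cut of $u^{s_{r-1}-1}$ and produces a correction proportional to $(e^{2\pi is_{r-1}}-1)\int_{0}^{t}u^{s_{r-1}-1}\prod_{j'=1}^{r-1}g\bigl(u+\sum_{k=j'}^{r-2}t_{k}\bigr)\,du$. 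Because $g$ is holomorphic at $0$, this correction expands in the \emph{non-integral} powers $t^{\,s_{r-1}+n}$ $(n\in\mathbb{N}_{0})$, and multiplying by the integer-power series $g(t)$ keeps every exponent non-integral; so for generic non-integral $s_{r-1}$ the integer coefficient $[t^{l}]$ vanishes, which is precisely what is needed. Since both sides of the claimed identity are entire in $(s_{1},\dots,s_{r-1})$ by Proposition~\ref{prp:1.2}, the generic case propagates to all of $\mathbb{C}^{r-1}$. Finally, to justify the interchanges of summation, coefficient extraction and integration I would first run the argument in a half-space $\mathfrak{R}(s_{k})\gg0$, where each Hankel contour may be opened to the line integral $\int_{0}^{\infty}$ and differentiation in $t$ under the integral sign is routine, and then analytically continue; specializing all $s_{k}$ to non-positive integers recovers Theorem~\ref{prop:2.1} as a consistency check.
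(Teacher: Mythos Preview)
Your approach is genuinely different from the paper's. In this paper Proposition~\ref{prop:2.2} is quoted from \cite{Komi2}, but it is re-obtained here as the $q=1$ case of Theorem~\ref{cor:4.1}, whose proof proceeds through the Mellin--Barnes representation (Proposition~\ref{prp:4.1}), a residue computation shifting the vertical line of integration (Proposition~\ref{thm:4.1}), and then the binomial inversion of Lemma~\ref{lem:4.2}. You bypass all of this: working directly with the Hankel-contour formula of Proposition~\ref{prp:1.2}, you evaluate the last slot at $s_{r}=-j$ by a residue, repackage the finite sum over $i+j=l$ as the $t^{l}$-coefficient of $g(t)J(t)$, and reduce the statement to the single claim $[t^{l}]\bigl(g(t)(J(t)-J(0))\bigr)=0$. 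This is a clean and more elementary route; in particular it avoids the Mellin--Barnes machinery and the separate inversion lemma. Conversely, the paper's method immediately generalises to arbitrary $q$ (Theorem~\ref{cor:4.1}), whereas extending your argument to $q\ge2$ would require iterating the last-slot evaluation.

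Your reduction is correct; the only place I would tighten is the justification of $J(t)=J(0)$. The ``non-integral powers'' description works but mixes two pictures (the formal Taylor coefficients $J_n$ defined by differentiation under the Hankel integral, and the Puiseux expansion of the analytic function $J(t)$ obtained from the straight-line integral) and one has to argue why these agree. A shorter way to close it: since the integrand depends on $t$ and $t_{r-1}$ only through their sum, one has $\partial_t(\text{integrand})=\partial_{t_{r-1}}(\text{integrand})$. Hence
\[
J_n=\frac{1}{n!}\int_{\mathcal{C}^{r-1}}\partial_{t_{r-1}}^{\,n}\!\left[t_{r-1}^{\,s_{r-1}-1}\prod_{j'=1}^{r-1}g\Bigl(\textstyle\sum_{k=j'}^{r-1}t_k\Bigr)\right]\prod_{k=1}^{r-2}t_k^{s_k-1}\,dt_k\,dt_{r-1},
\]
and integrating by parts in $t_{r-1}$ along $\mathcal{C}$ leaves only boundary contributions at $+\infty$, which vanish by the exponential decay of $g$. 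Thus $J_n=0$ for every $n\ge1$, with no restriction on $s_{r-1}$ and no need for a separate analytic-continuation step. This is the same conclusion your contour-deformation heuristic points to, stated without the branch-cut bookkeeping.
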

In the next section, we will show the generalization this proposition.

\section{Functional relation of desingularized MZF}
We prove shuffle-type product formulae between $\zeta_p^{\rm des}(s_1,\dots,s_p)$ and
 $\zeta_q^{\rm des}(-l_1,\dots,-l_q)$ in Theorem \ref{cor:4.1}. We assume $r\in\mathbb{N}_{\geq2}$ in this section. In \cite{FKMT1}, the {\it multiple zeta-function of the generalized Euler-Zagier type} is defined by
\begin{align*}
	\zeta_r(s_1,\dots,s_r;\gamma_1,\dots,\gamma_r):=\sum_{m_1,\dots,m_r\geq1}\prod_{k=1}^r\left(\gamma_{1}m_{1}+\cdots+\gamma_km_k\right)^{-s_k},
\end{align*}
for $\gamma_1,\dots,\gamma_r\in\mathbb{C}$ with the condition $\Re(\gamma_j)>0\quad (1\leq j\leq r)$.
This series absolutely converges in the region
\begin{equation}\label{eqn:4.1}
	\{(s_1,\dots,s_r)\in\mathbb{C}^r\ |\ \Re(s_{r-k+1}+\cdots+s_r)>k\ (1\leq k\leq r)\}.
\end{equation}
In \cite{Matsumoto}, it is proved that this function $\zeta_r(s_1,\dots,s_r;\gamma_1,\dots,\gamma_r)$ can be meromorphically continued to $\mathbb{C}^r$.
For simplicity, we sometimes denote it by $\zeta_r((s_j);(\gamma_j))$. 
\begin{lmm}\label{lmm:4.1}
	For $s_1,\dots,s_r\in\mathbb{C}$, we have
	{\small
	\begin{align}\label{eqn:4.8}
		\zeta_r^{\rm des}(s_1,\dots,s_r)=\lim_{\substack{c\rightarrow1 \\ c\in\mathbb{R}\setminus\{1\}}}\frac{1}{(1-c)^r}\sum_{\delta_1,\dots,\delta_r\in\{0,1\}}(-c)^{\delta_1+\cdots+\delta_r}\zeta_r(s_1,\dots,s_r;c^{\delta_1},\dots,c^{\delta_r}).
	\end{align}}
\end{lmm}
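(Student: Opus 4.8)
The plan is to unfold the integral that defines $\zeta_r^{\rm des}$ and expand the generating series $\tilde{\mathfrak{H}}_r$ factor by factor. Write $T_j:=t_j+\cdots+t_r$ and $|\delta|:=\delta_1+\cdots+\delta_r$ for $\delta=(\delta_1,\dots,\delta_r)\in\{0,1\}^r$. Each factor of $\tilde{\mathfrak{H}}_r(t_1,\dots,t_r;c)$ satisfies
\[
	\frac{1}{e^{T_j}-1}-\frac{c}{e^{cT_j}-1}=\sum_{\delta_j\in\{0,1\}}(-1)^{\delta_j}\frac{c^{\delta_j}}{e^{c^{\delta_j}T_j}-1},
\]
so, expanding the product over $j=1,\dots,r$,
\[
	\tilde{\mathfrak{H}}_r(t_1,\dots,t_r;c)=\sum_{\delta_1,\dots,\delta_r\in\{0,1\}}(-1)^{|\delta|}c^{|\delta|}\prod_{j=1}^r\frac{1}{e^{c^{\delta_j}T_j}-1}.
\]
Substituting this into the defining integral of $\zeta_r^{\rm des}(s_1,\dots,s_r)$, pulling the finite sum over $\delta$ outside the integral over $\mathcal{C}^r$, and using $(-1)^{|\delta|}c^{|\delta|}=(-c)^{|\delta|}$, the assertion (\ref{eqn:4.8}) is reduced to the single-$\delta$ identity
\[
	\prod_{k=1}^r\frac{1}{(e^{2\pi is_k}-1)\Gamma(s_k)}\int_{\mathcal{C}^r}\prod_{j=1}^r\frac{1}{e^{c^{\delta_j}T_j}-1}\prod_{k=1}^r t_k^{s_k-1}\,dt_k=\zeta_r(s_1,\dots,s_r;c^{\delta_1},\dots,c^{\delta_r}),
\]
to be proved for each fixed $\delta\in\{0,1\}^r$ and each fixed real $c\in(0,\infty)\setminus\{1\}$ (only $c$ near $1$ enters the limit); the prefactor $1/(1-c)^r$ and the limit $c\to1$ then carry over unchanged.

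For the single-$\delta$ identity I would argue first for $(s_1,\dots,s_r)$ in a nonempty open region where each $s_k$ is non-integral with $\Re(s_k)$ large. There the contribution of the circle of radius $\varepsilon$ in each one-dimensional $\mathcal{C}$-integral tends to $0$ as $\varepsilon\to0$, every interchange below is legitimate, and the generalized Euler--Zagier series converges absolutely in the region (\ref{eqn:4.1}). Expand each factor geometrically, $\frac{1}{e^{c^{\delta_j}T_j}-1}=\sum_{n_j\geq1}e^{-n_jc^{\delta_j}T_j}$, rewrite the exponent as $\sum_{j=1}^r n_jc^{\delta_j}T_j=\sum_{k=1}^r(c^{\delta_1}n_1+\cdots+c^{\delta_k}n_k)\,t_k$, and apply the classical Hankel-contour evaluation $\int_{\mathcal{C}}e^{-Mt}t^{s-1}\,dt=(e^{2\pi is}-1)\Gamma(s)M^{-s}$ (valid for every $M>0$ and every $s\in\mathbb{C}$) to each of the $r$ one-dimensional integrals, with $M=c^{\delta_1}n_1+\cdots+c^{\delta_k}n_k>0$. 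The factors $(e^{2\pi is_k}-1)\Gamma(s_k)$ then cancel against the prefactor and what remains is $\sum_{n_1,\dots,n_r\geq1}\prod_{k=1}^r(c^{\delta_1}n_1+\cdots+c^{\delta_k}n_k)^{-s_k}$, which is precisely $\zeta_r(s_1,\dots,s_r;c^{\delta_1},\dots,c^{\delta_r})$. Both sides of the identity are meromorphic functions of $(s_1,\dots,s_r)\in\mathbb{C}^r$ — the right-hand side by Matsumoto's analytic continuation \cite{Matsumoto}, the left-hand side since the $\mathcal{C}$-integral is entire in each $s_k$ — so their agreement on the chosen open set propagates to all of $\mathbb{C}^r$.

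Multiplying by $(-c)^{|\delta|}/(1-c)^r$, summing over $\delta\in\{0,1\}^r$, and letting $c\to1$ then gives (\ref{eqn:4.8}); the existence of this limit, and the fact that it represents the entire function $\zeta_r^{\rm des}$, is built into the definition of $\zeta_r^{\rm des}$ together with Proposition \ref{prp:1.2}, so nothing new has to be checked there. The step I expect to be the main obstacle is the analytic bookkeeping inside the one-dimensional contour integrals — the vanishing of the small-circle contribution and the interchange of $\sum_{n_j\geq1}$ with the integration over $\mathcal{C}$ — which is exactly why one must first restrict to a domain with $\Re(s_k)$ large and only afterwards invoke meromorphic continuation. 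The partial-fraction-type split of $\tilde{\mathfrak{H}}_r$ into $2^r$ pieces and the interchange of that finite sum with the integral are routine, and all of these manipulations are of the same type as those already carried out in \cite{FKMT1} and \cite{Matsumoto}.
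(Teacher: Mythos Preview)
Your approach is the same as the paper's: split $\tilde{\mathfrak{H}}_r$ into $2^r$ pieces indexed by $\delta\in\{0,1\}^r$, identify each with a generalized Euler--Zagier series via the Mellin transform in the region of absolute convergence, then extend by analytic continuation. The only point of divergence is that the paper carries this out over $(0,\infty)^r$ (starting from the right-hand side of (\ref{eqn:4.8}) and ending at the real-integral form of $\zeta_r^{\rm des}$), whereas you work directly on the Hankel contour $\mathcal{C}^r$.

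That difference hides one genuine snag in your ordering. You interchange the finite $\delta$-sum with the $\mathcal{C}^r$-integral and then treat each single-$\delta$ term as a well-defined contour integral, but for a fixed $\delta$ the integrand $\prod_{j}(e^{c^{\delta_j}T_j}-1)^{-1}$ has poles along each hyperplane $T_j=0$, and for $r\ge2$ these hyperplanes meet the small-circle portion of $\mathcal{C}^r$: for instance $t_{r-1}=\varepsilon e^{i\theta}$ and $t_r=\varepsilon e^{i(\theta+\pi)}$ give $T_{r-1}=0$. It is precisely the cancellation among the $2^r$ summands that makes $\tilde{\mathfrak{H}}_r$ holomorphic at $T_j=0$, so the separated contour integrals you write down are not defined, and your subsequent claim that ``the circle contribution tends to $0$'' is being applied to an object that does not yet exist. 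The repair is exactly the paper's route: for $\Re(s_k)$ large, first collapse the $\mathcal{C}^r$-integral of the \emph{holomorphic} integrand $\tilde{\mathfrak{H}}_r$ to the $(0,\infty)^r$-integral (the standard Hankel-to-real-axis reduction, applied to a regular integrand), and only then perform the $\delta$-split, now on $(0,\infty)^r$ where every $T_j>0$ and each summand is individually integrable.
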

\begin{proof}
	Let $c>0$ such that $|c-1|$ is sufficiently small. We assume $(s_1,\dots,s_r)\in\mathbb{C}^r$ satisfies
	\begin{equation*}
		\Re(s_{r-k+1}+\cdots+s_r)>k\quad(1\leq k\leq r).
	\end{equation*}
	Then, we have
	{\small
	\begin{align}\label{eqn:4.9}
		&\lim_{\substack{c\rightarrow1 \\ c\in\mathbb{R}\setminus\{1\}}}\frac{1}{(1-c)^r}\sum_{\delta_1,\dots,\delta_r\in\{0,1\}}(-c)^{\delta_1+\cdots+\delta_r}\zeta_r(s_1,\dots,s_r;c^{\delta_1},\dots,c^{\delta_r}) \\
		&=\lim_{\substack{c\rightarrow1 \\ c\in\mathbb{R}\setminus\{1\}}}\frac{1}{(1-c)^r}\sum_{\delta_1,\dots,\delta_r\in\{0,1\}}(-c)^{\delta_1+\cdots+\delta_r}\sum_{m_1,\dots,m_r\geq1}\prod_{k=1}^r\left(c^{\delta_1}m_1+\cdots+c^{\delta_k}m_k\right)^{-s_k}.  \nonumber
	\end{align}}
	Because we have 
	\begin{equation*}
		m^{-s}=\frac{1}{\Gamma(s)}\int_0^{\infty}e^{-tm}t^{s-1}dt
	\end{equation*}
	by using the Mellin transformation, we get
	\begin{align*}
		\lim_{\substack{c\rightarrow1 \\ c\in\mathbb{R}\setminus\{1\}}}&\frac{1}{(1-c)^r}\sum_{\delta_1,\dots,\delta_r\in\{0,1\}}(-c)^{\delta_1+\cdots+\delta_r}\zeta_r(s_1,\dots,s_r;c^{\delta_1},\dots,c^{\delta_r}) \\
		=&\lim_{\substack{c\rightarrow1 \\ c\in\mathbb{R}\setminus\{1\}}}\frac{1}{(1-c)^r}\sum_{\delta_1,\dots,\delta_r\in\{0,1\}}(-c)^{\delta_1+\cdots+\delta_r} \\
		&\hspace{2.5cm}\cdot \sum_{m_1,\dots,m_r\geq1}
		 \prod_{k=1}^r\left\{\frac{1}{\Gamma(s_k)}\int_0^{\infty} e^{-t_k\sum_{j=1}^kc^{\delta_j}m_j} t_k^{s_k-1}d t_k\right\} \\
		=&\lim_{\substack{c\rightarrow1 \\ c\in\mathbb{R}\setminus\{1\}}}\frac{1}{(1-c)^r}\sum_{\delta_1,\dots,\delta_r\in\{0,1\}}(-c)^{\delta_1+\cdots+\delta_r}\sum_{m_1,\dots,m_r\geq1} \\
		&\hspace{2.5cm}\cdot \prod_{k=1}^r\frac{1}{\Gamma(s_k)}\int_{(0,\infty)^r} \prod_{n=1}^re^{-t_n\sum_{j=1}^nc^{\delta_j}m_j} \prod_{l=1}^r t_l^{s_l-1}d t_l.
		\intertext{By using $\displaystyle\prod_{n=1}^re^{-t_n\sum_{j=1}^nc^{\delta_j}m_j}=\prod_{j=1}^re^{-m_jc^{\delta_j}\sum_{n=j}^rt_n}$, we have}
		=&\lim_{\substack{c\rightarrow1 \\ c\in\mathbb{R}\setminus\{1\}}}\frac{1}{(1-c)^r}\sum_{\delta_1,\dots,\delta_r\in\{0,1\}}(-c)^{\delta_1+\cdots+\delta_r}\sum_{m_1,\dots,m_r\geq1} \\
		&\hspace{2.5cm}\cdot \prod_{k=1}^r\frac{1}{\Gamma(s_k)}\int_{(0,\infty)^r} \prod_{j=1}^re^{-m_jc^{\delta_j}\sum_{n=j}^rt_n} \prod_{l=1}^r t_l^{s_l-1}d t_l.
	\end{align*}
	Because $\zeta((s_j);(c^{\delta_j}))$ absolutely converges, the integral $\int_{(0,\infty)^r}$ and the sum $\sum_{m_1,\dots,m_r\geq1}$ are commutative. So we have
	\begin{align*}
		\lim_{\substack{c\rightarrow1 \\ c\in\mathbb{R}\setminus\{1\}}}&\frac{1}{(1-c)^r}\sum_{\delta_1,\dots,\delta_r\in\{0,1\}}(-c)^{\delta_1+\cdots+\delta_r}\zeta_r(s_1,\dots,s_r;c^{\delta_1},\dots,c^{\delta_r}) \\
		&=\lim_{\substack{c\rightarrow1 \\ c\in\mathbb{R}\setminus\{1\}}}\frac{1}{(1-c)^r}\prod_{k=1}^r\frac{1}{\Gamma(s_k)} \\
		&\hspace{.5cm}\cdot \int_{(0,\infty)^r} \sum_{\delta_1,\dots,\delta_r\in\{0,1\}}(-c)^{\delta_1+\cdots+\delta_r}\sum_{m_1,\dots,m_r\geq1}\prod_{j=1}^re^{-m_jc^{\delta_j}\sum_{n=j}^rt_n} \prod_{l=1}^r t_l^{s_l-1}d t_l \\
		&=\lim_{\substack{c\rightarrow1 \\ c\in\mathbb{R}\setminus\{1\}}}\frac{1}{(1-c)^r}\prod_{k=1}^r\frac{1}{\Gamma(s_k)} \\
		&\hspace{.5cm}\cdot \int_{(0,\infty)^r}\prod_{j=1}^r\left\{ \sum_{\delta_j\in\{0,1\}}(-c)^{\delta_j}\sum_{m_j\geq1}e^{-m_jc^{\delta_j}\sum_{n=j}^rt_n} \right\}\prod_{l=1}^r t_l^{s_l-1}d t_l. \\
		\intertext{By using the defintion of $\tilde{\mathfrak{H}}_r$ and the following formula
			\begin{equation}\label{eqn:2}
				\frac{1}{e^y-1}-\frac{c}{e^{cy}-1}=\sum_{m\geq1}e^{-my}-c\sum_{m\geq1}e^{-mcy}=\sum_{\delta\in\{0,1\}}(-c)^{\delta}\sum_{m\geq1}e^{-mc^{\delta}y},
			\end{equation}
		for $y>0$, we get}
		&\lim_{\substack{c\rightarrow1 \\ c\in\mathbb{R}\setminus\{1\}}}\frac{1}{(1-c)^r}\sum_{\delta_1,\dots,\delta_r\in\{0,1\}}(-c)^{\delta_1+\cdots+\delta_r}\zeta_r(s_1,\dots,s_r;c^{\delta_1},\dots,c^{\delta_r}) \\
		&=\lim_{\substack{c\rightarrow1 \\ c\in\mathbb{R}\setminus\{1\}}}\frac{1}{(1-c)^r}\prod_{k=1}^r\frac{1}{\Gamma(s_k)}\int_{(0,\infty)^r}\tilde{\mathfrak{H}}_r\left(t_1,\dots,t_r;c\right)\prod_{l=1}^r t_l^{s_l-1}d t_l \\
		&=\zeta_r^{\rm des}(s_1,\dots,s_r).
	\end{align*}
	Therefore, we get the claim for $(s_1,\dots,s_r)\in\mathbb{C}^r$ with
	\begin{equation*}
		\Re(s_{r-k+1}+\cdots+s_r)>k\quad (1\leq k\leq r).
	\end{equation*}
	Because $\zeta_r^{\rm des}(s_1,\dots,s_r)$ and $\zeta_r((s_j);(\gamma_j))$ are meromorphic on $\mathbb{C}^r$ and the limit of meromorphic functions is also meromorphic, the equation (\ref{eqn:4.8}) holds for $(s_1,\dots,s_r)\in\mathbb{C}^r$.
\end{proof}
\begin{lmm}\label{lmm:4.2}
	Let $\gamma_1,\dots,\gamma_r>0$, $s_1,\dots,s_r\in\mathbb{C}$ with $\Re(s_j)>1$ $(1\leq j \leq r)$. Put $1\leq t\leq r-1$ and take $a_{t+1},\dots,a_r\in\mathbb{R}$ with $-\Re(s_k)<a_k<0$ $(t+1\leq k \leq r)$. Then, we have 
\begin{align}\label{eqn:4.3}
	\zeta_r((s_j);(\gamma_j)) =& \left(\frac{1}{2\pi i}\right)^{r-t}\int_{(a_{t+1})\times\cdots\times(a_r)}\prod_{k=t+1}^r\frac{\Gamma(s_k+z_k)\Gamma(-z_k)}{\Gamma(s_k)} \\
	&\hspace{0.5cm}\cdot \zeta_t\left(s_1,\dots,s_{t-1},s_t+\sum_{j=t+1}^r(s_j+z_j);\gamma_1,\dots,\gamma_t\right) \nonumber\\
	&\hspace{1cm}\cdot\zeta_{r-t}(-z_{t+1},\dots,-z_{r};\gamma_{t+1},\dots,\gamma_r)\prod_{l=t+1}^rdz_l. \nonumber
\end{align}
	Here, the symbol $(a_k)$ is the path of integration on the vertical line $\Re{(z_k)}=a_k$ from $a_k-i\infty$ to $a_k+i\infty$, for $t+1\leq k\leq r$.
\end{lmm}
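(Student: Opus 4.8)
The plan is to derive \eqref{eqn:4.3} from the classical Mellin--Barnes integral
\[
(1+\lambda)^{-s}=\frac{1}{2\pi i}\int_{(a)}\frac{\Gamma(s+z)\,\Gamma(-z)}{\Gamma(s)}\,\lambda^{z}\,dz
\qquad\bigl(-\Re(s)<a<0,\ |\arg\lambda|<\pi\bigr),
\]
which I would insert into the defining Dirichlet series of $\zeta_r((s_j);(\gamma_j))$ so as to peel the block of variables $m_{t+1},\dots,m_r$ away from $m_1,\dots,m_t$ all at once.

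First I would work in the region $\Re(s_j)>1$ $(1\le j\le r)$, where $\zeta_r((s_j);(\gamma_j))=\sum_{m_1,\dots,m_r\ge1}\prod_{k=1}^{r}(\gamma_1m_1+\cdots+\gamma_km_k)^{-s_k}$ converges absolutely. Writing $H:=\gamma_1m_1+\cdots+\gamma_tm_t$ and, for $t+1\le k\le r$, $T_k:=\gamma_{t+1}m_{t+1}+\cdots+\gamma_km_k>0$, so that $\gamma_1m_1+\cdots+\gamma_km_k=H(1+T_k/H)$, I would apply the Mellin--Barnes formula with $s=s_k$, $\lambda=T_k/H$ and abscissa $a=a_k$ to each factor with index $k\ge t+1$; this is allowed precisely because $-\Re(s_k)<a_k<0$ (and $T_k/H>0$). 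Multiplying the $r-t$ resulting identities by the untouched factors $\prod_{k=1}^{t}(\gamma_1m_1+\cdots+\gamma_km_k)^{-s_k}$, and combining the $k=t$ factor $H^{-s_t}$ with the head powers $H^{-s_k-z_k}$ $(t<k\le r)$ into $H^{-(s_t+\sum_{j=t+1}^{r}(s_j+z_j))}$, expresses $\prod_{k=1}^{r}(\gamma_1m_1+\cdots+\gamma_km_k)^{-s_k}$ as an $(r-t)$-fold integral over $(a_{t+1})\times\cdots\times(a_r)$ whose integrand is the product of the Gamma quotient $\prod_{k=t+1}^{r}\Gamma(s_k+z_k)\Gamma(-z_k)/\Gamma(s_k)$, a factor $F$ depending only on $m_1,\dots,m_t$ (the remaining head factors times that power of $H$), and a factor $G=\prod_{k=t+1}^{r}T_k^{z_k}$ depending only on $m_{t+1},\dots,m_r$.

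Next I would interchange $\sum_{m_1,\dots,m_r\ge1}$ with the $(r-t)$-fold integral. Summing $F$ over $m_1,\dots,m_t$ reproduces, by the very definition of the generalized Euler--Zagier zeta-function, the factor $\zeta_t\bigl(s_1,\dots,s_{t-1},s_t+\sum_{j=t+1}^{r}(s_j+z_j);\gamma_1,\dots,\gamma_t\bigr)$, while summing $G=\prod_{k}T_k^{-(-z_k)}$ over $m_{t+1},\dots,m_r$ reproduces $\zeta_{r-t}(-z_{t+1},\dots,-z_r;\gamma_{t+1},\dots,\gamma_r)$; this is exactly \eqref{eqn:4.3}. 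Finally, since $\zeta_r$, $\zeta_t$ and $\zeta_{r-t}$ are meromorphic on the relevant spaces (\cite{Matsumoto}) and the Gamma factors are holomorphic on the strips $-\Re(s_k)<\Re(z_k)<0$, the identity established in an initial absolutely convergent configuration extends to the asserted range by analytic continuation together with a standard contour-deformation argument.

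The step I expect to be the main obstacle is the justification of this interchange. I would handle it by first taking the contour abscissae $a_k$ far enough to the left that, in addition to $-\Re(s_k)<a_k<0$, the series for $\zeta_{r-t}(-z_{t+1},\dots,-z_r;\gamma_{t+1},\dots,\gamma_r)$ lies in its domain of absolute convergence along the contours, and then using Stirling's formula, by which $|\Gamma(s_k+z_k)\Gamma(-z_k)|$ decays like $e^{-\pi|\Im z_k|}$ up to a fixed power of $|\Im z_k|$ on the line $\Re(z_k)=a_k$. Together with the absolute convergence of $\sum_{m_1,\dots,m_t}|F|$ (guaranteed by $\Re(s_j)>1$) and of $\sum_{m_{t+1},\dots,m_r}|G|=\sum_{m_{t+1},\dots,m_r}\prod_{k}T_k^{a_k}$ (guaranteed by the choice of contours), this makes the whole double sum-integral absolutely convergent, so that Fubini's theorem legitimizes the rearrangement; the contours are then moved back to their stated positions by the meromorphic-continuation argument indicated above.
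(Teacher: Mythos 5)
Your Mellin--Barnes computation is the same as the paper's proof: you insert $(1+\lambda)^{-s_k}$ with $\lambda=T_k/H$ for each $k\geq t+1$, multiply by the untouched head factors, and resum the two blocks of $m$-variables to produce the $\zeta_t$ and $\zeta_{r-t}$ factors. Your justification of the sum--integral interchange (place the abscissae so that the series for $\zeta_{r-t}$ converges absolutely on the contours, and use the exponential decay of $\Gamma(s_k+z_k)\Gamma(-z_k)$ on vertical lines) is in fact more explicit than the paper's two-line remark, and it does prove \eqref{eqn:4.3} for abscissae with, say, $-\Re(s_k)<a_k<-1$.

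The genuine gap is your final step, moving the contours back to arbitrary $a_k$ with $-\Re(s_k)<a_k<0$ ``by analytic continuation together with a standard contour-deformation argument.'' Analytic continuation in $(s_1,\dots,s_r)$ is irrelevant here: for fixed abscissae both sides are functions of the $s$-variables on the region $\Re(s_j)>1$, and the abscissae are not analytic parameters one can continue in. The contour deformation itself is not cost-free: in moving $\Re(z_r)$ from below $-1$ to inside $(-1,0)$ you cross the polar hyperplane $-z_r=1$ of the meromorphically continued factor $\zeta_{r-t}(-z_{t+1},\dots,-z_r;\gamma_{t+1},\dots,\gamma_r)$ (already for $r-t=1$ one has $\zeta_1(-z_r;\gamma_r)=\gamma_r^{z_r}\zeta(-z_r)$ with a pole of nonzero residue at $z_r=-1$), and possibly further singular hyperplanes of $\zeta_{r-t}$. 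Hence the value of the right-hand side jumps by residue terms, exactly as in Matsumoto's classical treatment of the double zeta-function, where the contour at $-1<c<0$ forces the extra term $\zeta(s_1+s_2-1)/(s_2-1)$; so the deformation changes the identity rather than preserving it. Note that the paper's proof performs no such deformation: it establishes \eqref{eqn:4.3} directly in the range where the series defining the two zeta factors converge on the contours, and that is also the only range in which the lemma is used afterwards (Lemma \ref{lmm:4.3} and Proposition \ref{prp:4.1} take $-\Re(s_j)<a_j<-1$). Your argument becomes correct, and coincides with the paper's, if you stop after the Fubini step and assert the identity for such abscissae; covering the full range $-\Re(s_k)<a_k<0$ as literally stated would require adding the residue corrections, after which the displayed identity no longer holds in that subrange.
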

\begin{proof}
Consider Mellin-Barnes integral formula
\begin{equation*}
(1+\lambda)^{-s}=\frac{1}{2\pi i}\int_{(a)}\frac{\Gamma(s+z)\Gamma(-z)}{\Gamma(s)}\lambda^zdz,
\end{equation*}
where $\lambda,s\in\mathbb{C}$, $\lambda\neq0$, $|\arg \lambda|<\pi$, $\Re(s)>0$, $-\Re(s)<a<0$.

For $m_1,\dots,m_r\geq1$, by putting $\lambda=\frac{\gamma_{t+1}m_{t+1}+\cdots+\gamma_jm_j}{\gamma_1m_1+\cdots+\gamma_tm_t}$ and $s=s_j$ and $a=a_j$ for $j=t+1,\dots,r$ ($1\leq t\leq r-1$), we have
\begin{equation*}
	\left(\frac{\gamma_1m_{1}+\cdots+\gamma_jm_j}{\gamma_1m_1+\cdots+\gamma_tm_t}\right)^{-s_j}=\frac{1}{2\pi i}\int_{(a_j)}\frac{\Gamma(s_j+z_j)\Gamma(-z_j)}{\Gamma(s_j)}\left(\frac{\gamma_{t+1}m_{t+1}+\cdots+\gamma_jm_j}{\gamma_1m_1+\cdots+\gamma_tm_t}\right)^{z_j}dz_j.
\end{equation*}
So we get
\begin{align*}
	&\left(\gamma_1m_{1}+\cdots+\gamma_jm_j\right)^{-s_j} \\
	&=\frac{1}{2\pi i}\int_{(a_j)}\frac{\Gamma(s_j+z_j)\Gamma(-z_j)}{\Gamma(s_j)}(\gamma_1m_1+\cdots+\gamma_tm_t)^{-s_j-z_j}\left(\gamma_{t+1}m_{t+1}+\cdots+\gamma_jm_j\right)^{z_j}dz_j.
\end{align*}
Taking product over $j=t+1,\dots,r$ and taking summation over $m_{t+1},\dots,m_r\geq1$, we have
\begin{align}\label{eqn:4.2.1}
	&\sum_{m_{t+1},\dots,m_r\geq1}\prod_{j=t+1}^r\left(\gamma_1m_{1}+\cdots+\gamma_jm_j\right)^{-s_j} \\
	&=\left(\frac{1}{2\pi i}\right)^{r-t}\sum_{m_{t+1},\dots,m_r\geq1}\int_{(a_{t+1})\times\cdots\times(a_r)}\prod_{k=t+1}^r\frac{\Gamma(s_k+z_k)\Gamma(-z_k)}{\Gamma(s_k)} \nonumber\\
	&\hspace{0cm}\cdot (\gamma_1m_1+\cdots+\gamma_tm_t)^{-\sum_{j=t+1}^r(s_j+z_j)}\prod_{j=t+1}^r\left(\gamma_{t+1}m_
	{t+1}+\cdots+\gamma_jm_j\right)^{z_j}\prod_{l=t+1}^rdz_l. \nonumber
\end{align}

By multiplying $\prod_{j=1}^t(\gamma_1m_{1}+\cdots+\gamma_jm_j)^{-s_j}$ to the equation (\ref{eqn:4.2.1}) and taking summation over $m_1,\dots,m_t\geq1$, we see that the left hand side becomes $\zeta_r((s_j);(\gamma_j))$. The series $\zeta_r((s_j);(\gamma_j))$ absolutely converges in the region (\ref{eqn:4.1}) and we have $\Re(s_j)>1$\ ($1\leq j\leq r$), so we get the equation (\ref{eqn:4.3}).
\end{proof}
We set $(-z_1,\dots,-z_t):=\left(s_1,\dots,s_{t-1},s_t+\sum_{j=t+1}^r(s_j+z_j)\right)$.
\begin{lmm}\label{lmm:4.3}
	Let $c\in\mathbb{R}\setminus\{1\}$ satisfying that $|c-1|$ is sufficiently small. Let $s_1,\dots,s_r\in\mathbb{C}$ with $\Re(s_k)>1$ and let $a_j\in\mathbb{R}$ with $-\Re(s_j)<a_j<-1$ $(t+1\leq j\leq r)$. Then, the integral
	\begin{align}\label{eqn:4.10}
		&\int_{(a_{t+1})\times\cdots\times(a_r)}\left\{\prod_{k=t+1}^r\frac{\Gamma(s_k+z_k)\Gamma(-z_k)}{\Gamma(s_k)}\right\} \frac{1}{(1-c)^r}\sum_{\delta_1,\dots,\delta_r\in\{0,1\}} \\
		&\scalebox{.95}{$\displaystyle \hspace{0.5cm}\cdot (-c)^{\delta_1+\cdots+\delta_r}\zeta_t\left(-z_1,\dots,-z_t;c^{\delta_1},\dots,c^{\delta_t}\right) \cdot\zeta_{r-t}(-z_{t+1},\dots,-z_{r};c^{\delta_{t+1}},\dots,c^{\delta_r})\prod_{l=t+1}^rdz_l.$}\nonumber
	\end{align}
	uniformly converges.
\end{lmm}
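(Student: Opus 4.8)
Throughout write $z_k = a_k + iy_k$ for $t+1 \le k \le r$, so that the contour $(a_{t+1})\times\cdots\times(a_r)$ is parametrized by $(y_{t+1},\dots,y_r)\in\mathbb{R}^{r-t}$ and the integrand of (\ref{eqn:4.10}) becomes a function of these variables and of the parameters $s_1,\dots,s_r,a_{t+1},\dots,a_r,c$. The plan is to produce, for each fixed $c$, a function of $(y_{t+1},\dots,y_r)$ that is integrable on $\mathbb{R}^{r-t}$, dominates the modulus of the integrand, and whose integral stays bounded as $(s_1,\dots,s_r)$ runs over a compact subset of $\{\Re(s_k)>1\}$; this yields the absolute and uniform convergence at once. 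I would split the integrand into the ``gamma part''
\[
P:=\prod_{k=t+1}^r\frac{\Gamma(s_k+z_k)\Gamma(-z_k)}{\Gamma(s_k)}
\]
and the ``zeta part'' $Q$, namely $\frac{1}{(1-c)^r}\sum_{\delta_1,\dots,\delta_r\in\{0,1\}}(-c)^{\delta_1+\cdots+\delta_r}\zeta_t(-z_1,\dots,-z_t;c^{\delta_1},\dots,c^{\delta_t})\,\zeta_{r-t}(-z_{t+1},\dots,-z_r;c^{\delta_{t+1}},\dots,c^{\delta_r})$. The factor $P$ will provide exponential decay in every $y_k$; the factor $Q$ will be bounded by a constant not involving $(y_{t+1},\dots,y_r)$ at all.

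For $Q$ the crucial point is that both generalized Euler--Zagier zeta values occurring in it lie \emph{inside} the domain of absolute convergence (\ref{eqn:4.1}). Recall $(-z_1,\dots,-z_t)=(s_1,\dots,s_{t-1},s_t+\sum_{j=t+1}^r(s_j+z_j))$, so the real parts of the arguments of the first factor are $\Re(s_1),\dots,\Re(s_{t-1})$ together with $\Re(s_t)+\sum_{j=t+1}^r(\Re(s_j)+a_j)$; the hypotheses $\Re(s_k)>1$ and $a_j>-\Re(s_j)$ make each of these numbers exceed $1$, so the sum of the last $k$ of them exceeds $k$ for every $k$, which is precisely (\ref{eqn:4.1}) for $\zeta_t$. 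Likewise the arguments of $\zeta_{r-t}$ have real parts $-a_{t+1},\dots,-a_r$, each $>1$ since $a_j<-1$, so (\ref{eqn:4.1}) holds for $\zeta_{r-t}$ too. Since every $c^{\delta_j}$ is positive, inside (\ref{eqn:4.1}) one has the crude majorant $|\zeta_n((w_j);(\gamma_j))|\le\zeta_n((\Re w_j);(\gamma_j))<\infty$, obtained by bounding each term of the defining series by its modulus; applied to both factors it bounds them by finite constants depending only on $\Re(s_1),\dots,\Re(s_r)$, $a_{t+1},\dots,a_r$ and $c$, and not on the imaginary parts of the $z_k$. Hence $|Q|\le M$ for a constant $M=M(s,a,c)$ independent of $(y_{t+1},\dots,y_r)$, which stays bounded as $(s_1,\dots,s_r)$ ranges over a compact subset of $\{\Re(s_k)>1\}$ (with the $a_j$ adjusted accordingly).

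For $P$ I would invoke Stirling's formula in the shape $|\Gamma(\sigma+i\tau)|\asymp|\tau|^{\sigma-1/2}e^{-\pi|\tau|/2}$ as $|\tau|\to\infty$, uniformly for $\sigma$ in a bounded set. Here $\Re(s_k+z_k)=\Re(s_k)+a_k\in(0,\Re(s_k)-1)$ and $\Re(-z_k)=-a_k\in(1,\Re(s_k))$, so $\Gamma(s_k+z_k)$ and $\Gamma(-z_k)$ are finite and nonzero on the line, and the asymptotics give $|\Gamma(s_k+z_k)\Gamma(-z_k)|\ll(1+|y_k|)^{\Re(s_k)-1}e^{-\pi|y_k|}$ — the exponents $\Re(s_k)+a_k-1/2$ and $-a_k-1/2$ sum to $\Re(s_k)-1$, and the two factors $e^{-\pi|y_k|/2}$ multiply — uniformly for $\Re(s_k)$ bounded; and $1/|\Gamma(s_k)|$ is locally bounded. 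Consequently the modulus of the integrand is $\le M'\prod_{k=t+1}^r(1+|y_k|)^{\Re(s_k)-1}e^{-\pi|y_k|}$, and since $\int_{\mathbb{R}}(1+|y|)^{\sigma-1}e^{-\pi|y|}\,dy<\infty$ for every real $\sigma$, this majorant is integrable on $\mathbb{R}^{r-t}$ with tails vanishing uniformly for $(s_1,\dots,s_r)$ in a compact subset of $\{\Re(s_k)>1\}$. This establishes the uniform (and absolute) convergence of (\ref{eqn:4.10}).

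The step that I expect to be the real point — as opposed to routine — is the bound $|Q|\le M$ with $M$ independent of the imaginary parts of the $z_k$: it is exactly for this that the hypotheses $\Re(s_k)>1$ and $a_j<-1$ are imposed, since they push both zeta-factors strictly into (\ref{eqn:4.1}), where the trivial majorant applies; nothing about the behaviour of $\zeta_t$ or $\zeta_{r-t}$ outside their domains of absolute convergence is needed. (If one also wanted uniformity as $c\to1$, the prefactor $1/(1-c)^r$ would still be harmless: the sum over $(\delta_1,\dots,\delta_r)$ is an $r$-fold iterated difference in the parameter $c$ of a map holomorphic in a complex neighbourhood of $c=1$, so it is $O((1-c)^r)$ with implied constant again controlled by the same imaginary-part-free majorants via Cauchy's estimates.)
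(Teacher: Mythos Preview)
Your argument is technically sound for what it proves, but it proves the wrong uniformity. Look at how the lemma is used in Proposition~\ref{prp:4.1}: it is invoked together with Lebesgue's dominated convergence theorem to interchange $\lim_{c\to1}$ with $\int_{(a_{t+1})\times\cdots\times(a_r)}$. Thus ``uniformly'' in the statement means uniformly in $c$ near $1$ --- one needs a $c$-independent integrable majorant. Your main body fixes $c$ and establishes uniformity in $(s_1,\dots,s_r)$ over compacta, which is not the required conclusion; the constant $M=M(s,a,c)$ you produce for $|Q|$ blows up like $|1-c|^{-r}$ and cannot serve as a dominating function for the limit $c\to1$.

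You do address this in the parenthetical, and the idea there is correct and can be made into a valid proof: the sum over $(\delta_1,\dots,\delta_r)$ is indeed an $r$-fold iterated difference (in the separate parameters $\gamma_j=c^{\delta_j}$) of the holomorphic map $(\gamma_1,\dots,\gamma_r)\mapsto\zeta_t(-z_1,\dots,-z_t;\gamma_1,\dots,\gamma_t)\,\zeta_{r-t}(-z_{t+1},\dots,-z_r;\gamma_{t+1},\dots,\gamma_r)$, each single difference producing a factor $(1-c)$; and since your crude majorant $|\zeta_n((w_j);(\gamma_j))|\le\zeta_n((\Re w_j);(\gamma_j))$ is uniform over $\gamma$ in a fixed polydisc about $(1,\dots,1)$ and independent of the $y_k$, Cauchy's estimates give a $c$- and $y$-independent bound for $Q$. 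But this is the heart of the lemma, not an afterthought, and it needs to be written out.

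The paper takes a different and more direct route: it rewrites $Q$ via the integral representation of $\zeta_n((s_j);(\gamma_j))$ so that the $\delta$-sum factors over $j$, then applies the pointwise estimate from \cite[Lemma~3.6]{FKMT1}, namely $\bigl|\tfrac{1}{1-c}\bigr|\bigl|\tfrac{1}{e^y-1}-\tfrac{c}{e^{cy}-1}\bigr|<Ae^{-y/2}$ with $A$ independent of $c$, to obtain a $c$-free bound on $Q$ explicitly. This bound contains factors $1/|\Gamma(-z_k)|$ which cancel the $\Gamma(-z_k)$ in $P$, so the final integrability check is more delicate than yours (the exponential decay from $|\Gamma(s_k+z_k)|$ must beat the growth from $1/|\Gamma(-z_t)|$). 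Your approach, once the parenthetical is promoted to the main argument, has the advantage of retaining the full exponential decay $\prod_k e^{-\pi|y_k|}$ from $P$, making the final integrability step elementary; the paper's approach has the advantage of citing an off-the-shelf estimate rather than setting up an iterated-difference/Cauchy argument.
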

\begin{proof}
	We have
	\begin{align*}
		&\prod_{j=1}^t\left\{\frac{1}{\exp\left(\sum_{k=j}^tu_k\right)-1}
		-\frac{c}{\exp\left(c\sum_{k=j}^tu_k\right)-1}\right\} \\
		&\hspace{2cm}\cdot\prod_{j=t+1}^r\left\{\frac{1}{\exp\left(\sum_{k=j}^ru_k\right)-1}
		-\frac{c}{\exp\left(c\sum_{k=j}^ru_k\right)-1}\right\} \\
		&=\prod_{j=1}^t\left\{\sum_{\delta_j\in\{0,1\}}\frac{(-c)^{\delta_j}}{\exp\left(c^{\delta_j}\sum_{k=j}^tu_k\right)-1}\right\} 
		\cdot\prod_{j=t+1}^r\left\{\sum_{\delta_j\in\{0,1\}}\frac{(-c)^{\delta_j}}{\exp\left(c^{\delta_j}\sum_{k=j}^ru_k\right)-1}\right\} \\
		&=\sum_{\delta_1,\dots,\delta_r\in\{0,1\}} (-c)^{\delta_1+\cdots+\delta_r}
		\left\{\prod_{j=1}^t\frac{1}{\exp\left(c^{\delta_j}\sum_{k=j}^tu_k\right)-1}\right\}
		\left\{\prod_{j=t+1}^r\frac{1}{\exp\left(c^{\delta_j}\sum_{k=j}^ru_k\right)-1}\right\}.
	\end{align*}
	By using this and the following integral expression of $\zeta_r((s_j);(\gamma_j))$
	\begin{equation*}
		\zeta_r((s_j);(\gamma_j))=\prod_{k=1}^r\frac{1}{\Gamma(s_k)}\int_{(0,\infty)^r}\prod_{j=1}^r\frac{1}{\exp\left(\gamma_j\sum_{k=j}^ru_k\right)-1}\prod_{l=1}^ru_l^{s_l-1}du_l,
	\end{equation*}
	we have
	\begin{align*}
		&\frac{1}{(1-c)^r}\sum_{\delta_1,\dots,\delta_r\in\{0,1\}} (-c)^{\delta_1+\cdots+\delta_r}\zeta_t\left(-z_1,\dots,-z_t;c^{\delta_1},\dots,c^{\delta_t}\right) \\
		&\hspace{7.2cm}\vphantom{\frac{1}{(1-c)^r}\sum_{\delta_1,\dots,\delta_r\in\{0,1\}}}\cdot\zeta_{r-t}(-z_{t+1},\dots,-z_{r};c^{\delta_{t+1}},\dots,c^{\delta_r}) \\
		&=\frac{1}{(1-c)^r}\prod_{k=1}^r\frac{1}{\Gamma(-z_k)}\int_{(0,\infty)^r}
		\prod_{j=1}^t\left\{\frac{1}{\exp\left(\sum_{k=j}^tu_k\right)-1}
		-\frac{c}{\exp\left(c\sum_{k=j}^tu_k\right)-1}\right\} \\
		&\hspace{2.5cm}\cdot\prod_{j=t+1}^r\left\{\frac{1}{\exp\left(\sum_{k=j}^ru_k\right)-1}
		-\frac{c}{\exp\left(c\sum_{k=j}^ru_k\right)-1}\right\}
		\prod_{l=1}^ru_l^{-z_l-1}du_l.
	\end{align*}
	By \cite[Lemma 3.6]{FKMT1}, 
	for $c\in\mathbb{R}\setminus\{1\}$ such that $|c-1|$ is sufficiently small, we have a constant $A>0$ independent of $c$ such that
		\begin{equation*}
			\left|\frac{1}{c-1}\right|\left|\frac{1}{e^y-1}-\frac{c}{e^{cy}-1}\right|<Ae^{-y/2}
		\end{equation*}
		holds for any $y>0$. 
	Therefore, we get
	\begin{align*}
		&\left|\frac{1}{(1-c)^r}\sum_{\delta_1,\dots,\delta_r\in\{0,1\}} (-c)^{\delta_1+\cdots+\delta_r}\zeta_t\left(-z_1,\dots,-z_t;c^{\delta_1},\dots,c^{\delta_t}\right)\right. \\
		&\hspace{7.2cm}\left.\vphantom{\frac{1}{(1-c)^r}\sum_{\delta_1,\dots,\delta_r\in\{0,1\}}}\cdot\zeta_{r-t}(-z_{t+1},\dots,-z_{r};c^{\delta_{t+1}},\dots,c^{\delta_r})\right| \\
		&\leq\prod_{k=1}^r\frac{1}{\left|\Gamma(-z_k)\right|}\int_{(0,\infty)^r}
		\left\{\prod_{j=1}^tA\exp{\left(-\frac{1}{2}\sum_{k=j}^tu_k\right)}\right\} \\
		&\hspace{5cm}\cdot\left\{\prod_{j=t+1}^rA\exp{\left(-\frac{1}{2}\sum_{k=j}^ru_k\right)}\right\}
		\prod_{l=1}^ru_l^{-\Re(z_l)-1}du_l \\
		&=\prod_{k=1}^r\frac{A}{\left|\Gamma(-z_k)\right|}\int_{(0,\infty)^r}
		\exp{\left(-\frac{1}{2}\sum_{j=1}^t\sum_{k=j}^tu_k\right)}
		\exp{\left(-\frac{1}{2}\sum_{j=t+1}^r\sum_{k=j}^ru_k\right)}
		\prod_{l=1}^ru_l^{-\Re(z_l)-1}du_l \\
		&=\prod_{k=1}^r\frac{A}{\left|\Gamma(-z_k)\right|}\int_{(0,\infty)^r}
		\exp{\left(-\frac{1}{2}\sum_{k=1}^tku_k
		-\frac{1}{2}\sum_{k=t+1}^r(k-t)u_k\right)}
		\prod_{l=1}^ru_l^{-\Re(z_l)-1}du_l \\
		&=\left\{\prod_{k=1}^r\frac{A}{\left|\Gamma(-z_k)\right|}\right\}\prod_{k=1}^t\left\{\int_0^{\infty}
		\exp{\left(-\frac{k}{2}u_k\right)}
		u_k^{-\Re(z_k)-1}du_k\right\} \\
		&\hspace{5cm}\cdot\prod_{k=t+1}^r\left\{\int_0^{\infty}
		\exp{\left(-\frac{k-t}{2}u_k\right)}
		u_k^{-\Re(z_k)-1}du_k\right\}.
	\end{align*}
	Because we have
	\begin{equation*}
		n^{-s}\Gamma(s)=\int_0^{\infty}\exp(-nu)u^{s-1}du
	\end{equation*}
	 for $\Re(s)>0$ and $n\in\mathbb R_{>0}$ and we get $\Re(z_k)>0$ for $1\leq k\leq r$, we obtain the following inequality on the formula (\ref{eqn:4.10}):
	\begin{align*}
		&\left|\int_{(a_{t+1})\times\cdots\times(a_r)}\left\{\prod_{k=t+1}^r\frac{\Gamma(s_k+z_k)\Gamma(-z_k)}{\Gamma(s_k)}\right\} \frac{1}{(1-c)^r}\sum_{\delta_1,\dots,\delta_r\in\{0,1\}}\right. \\
		&\scalebox{.95}{$\left.\displaystyle \hspace{0.5cm}\cdot (-c)^{\delta_1+\cdots+\delta_r}\zeta_t\left(-z_1,\dots,-z_t;c^{\delta_1},\dots,c^{\delta_t}\right) \cdot\zeta_{r-t}(-z_{t+1},\dots,-z_{r};c^{\delta_{t+1}},\dots,c^{\delta_r})\prod_{l=t+1}^rdz_l\right|$} \\
		&\leq \int_{(a_{t+1})\times\cdots\times(a_r)}
		\left\{\prod_{k=t+1}^r\frac{|\Gamma(s_k+z_k)\Gamma(-z_k)|}{|\Gamma(s_k)|}\right\}
		\left\{\prod_{k=1}^r\frac{A}{\left|\Gamma(-z_k)\right|}\right\}\\
		&\hspace{2cm}\cdot
		\prod_{k=1}^t\left\{\left(\frac{k}{2}\right)^{\Re(z_k)}\Gamma\big(\Re(z_k)\big)\right\}
		\prod_{k=t+1}^r\left\{\left(\frac{k-t}{2}\right)^{\Re(z_k)}\Gamma\big(\Re(z_k)\big)\right\}\prod_{l=t+1}^r|dz_l|.
	\end{align*}
	On the above integral paths, we have $\Re(z_k)=a_k$ ($t+1\leq k\leq r$) and $-z_k=s_k$ ($1\leq k\leq t-1$) and $-z_t=s_t+\sum_{j=t+1}^r(s_j+z_j)$. So we put
	\begin{align*}
		C:=&A^r\left\{\prod_{k=t+1}^r\frac{1}{|\Gamma(s_k)|}\right\}
		\left\{\prod_{k=1}^{t-1}\frac{1}{\left|\Gamma(-z_k)\right|}\right\}\\
		&\hspace{1cm}\cdot\prod_{k=1}^t\left\{\left(\frac{k}{2}\right)^{\Re(z_k)}\Gamma\big(\Re(z_k)\big)\right\}
		\prod_{k=t+1}^r\left\{\left(\frac{k-t}{2}\right)^{\Re(z_k)}\Gamma\big(\Re(z_k)\big)\right\}.
	\end{align*}
	Then this symbol $C$ is independent on $z_{t+1},\dots,z_r$. Therefore, we get
	\begin{align*}
		&\left|\int_{(a_{t+1})\times\cdots\times(a_r)}\left\{\prod_{k=t+1}^r\frac{\Gamma(s_k+z_k)\Gamma(-z_k)}{\Gamma(s_k)}\right\} \frac{1}{(1-c)^r}\sum_{\delta_1,\dots,\delta_r\in\{0,1\}}\right. \\
		&\scalebox{.95}{$\left.\displaystyle \hspace{0.5cm}\cdot (-c)^{\delta_1+\cdots+\delta_r}\zeta_t\left(-z_1,\dots,-z_t;c^{\delta_1},\dots,c^{\delta_t}\right) \cdot\zeta_{r-t}(-z_{t+1},\dots,-z_{r};c^{\delta_{t+1}},\dots,c^{\delta_r})\prod_{l=t+1}^rdz_l\right|$} \nonumber\\
		&\leq C\int_{(a_{t+1})\times\cdots\times(a_r)}
		\prod_{k=t+1}^r|\Gamma(s_k+z_k)|
		\frac{1}{\left|\Gamma\left(s_t+\sum_{j=t+1}^r(s_j+z_j)\right)\right|}\prod_{l=t+1}^r|dz_l|.
	\end{align*}
	We have
	\begin{equation*}
		|\Gamma(\sigma+i\tau)|=\sqrt{2\pi}|\tau|^{\sigma-\frac{1}{2}}e^{-\frac{\pi}{2}|\tau|}\big(1+O(|\tau|^{-1})\big) \qquad (|\tau|\rightarrow\infty),
	\end{equation*}
	for $|\tau|\geq1$, where $O$ is the Landau symbol.
	So by using this equation, we get
	\begin{equation*}
		\int_{(a_{t+1})\times\cdots\times(a_r)}
		\prod_{k=t+1}^r|\Gamma(s_k+z_k)|
		\frac{1}{\left|\Gamma\left(s_t+\sum_{j=t+1}^r(s_j+z_j)\right)\right|}\prod_{l=t+1}^r|dz_l|<\infty.
	\end{equation*}
	We obtain the claim.
\end{proof}
The equation (\ref{eqn:4.3}) holds not only for $\zeta_r\left((s_j);(\gamma_j)\right)$ but also for $\zeta_r^{\rm des}(s_1,\dots,s_r)$.
\begin{prp}\label{prp:4.1}
	Let $s_1,\dots,s_r\in\mathbb{C}$ with $\Re{(s_j)}>1\ (1\leq j\leq r)$. Then, for  $-\Re{(s_k)}<a_k<-1\ (t+1\leq k\leq r)$ and $1\leq t\leq r-1$, we have
	\begin{align*}
		\zeta_r^{\rm des}(s_1,\dots,s_r) 
		=& \left(\frac{1}{2\pi i}\right)^{r-t}\int_{(a_{t+1})\times\cdots\times(a_r)}\prod_{k=t+1}^r\frac{\Gamma(s_k+z_k)\Gamma(-z_k)}{\Gamma(s_k)} \\
		&\cdot \zeta_t^{\rm des}\left(s_1,\dots,s_{t-1},s_t+\sum_{j=t+1}^r(s_j+z_j)\right)\zeta_{r-t}^{\rm des}(-z_{t+1},\dots,-z_{r})\prod_{l=t+1}^rdz_l.
	\end{align*}
\end{prp}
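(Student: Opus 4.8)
The plan is to assemble the identity from Lemmas~\ref{lmm:4.1}, \ref{lmm:4.2} and \ref{lmm:4.3}: the first rewrites the desingularized MZF as a $c\to1$ limit of a finite sum of generalized Euler--Zagier functions, the second performs the Mellin--Barnes splitting at that level, and the third makes it legitimate to pass the limit through the resulting integral.

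First I would invoke Lemma~\ref{lmm:4.1} to write
\[
	\zeta_r^{\rm des}(s_1,\dots,s_r)=\lim_{\substack{c\rightarrow1\\ c\in\mathbb{R}\setminus\{1\}}}\frac{1}{(1-c)^r}\sum_{\delta_1,\dots,\delta_r\in\{0,1\}}(-c)^{\delta_1+\cdots+\delta_r}\zeta_r(s_1,\dots,s_r;c^{\delta_1},\dots,c^{\delta_r}).
\]
Since $c>0$, $\Re(s_j)>1$ for all $j$, and the assumption $-\Re(s_k)<a_k<-1$ is stronger than the hypothesis $-\Re(s_k)<a_k<0$ of Lemma~\ref{lmm:4.2}, I may apply Lemma~\ref{lmm:4.2} with $(\gamma_j)=(c^{\delta_j})$ to each summand. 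Summing the resulting identities over $(\delta_1,\dots,\delta_r)$, carrying the finite sum and the scalar $(1-c)^{-r}$ inside the integral over $(a_{t+1})\times\cdots\times(a_r)$, and using the factorizations $\sum_{\delta_1,\dots,\delta_r}=\sum_{\delta_1,\dots,\delta_t}\sum_{\delta_{t+1},\dots,\delta_r}$, $(-c)^{\delta_1+\cdots+\delta_r}=(-c)^{\delta_1+\cdots+\delta_t}(-c)^{\delta_{t+1}+\cdots+\delta_r}$ and $(1-c)^{-r}=(1-c)^{-t}(1-c)^{-(r-t)}$, together with the fact that $\zeta_t(-z_1,\dots,-z_t;c^{\delta_1},\dots,c^{\delta_t})$ depends only on $\delta_1,\dots,\delta_t$ while $\zeta_{r-t}(-z_{t+1},\dots,-z_r;c^{\delta_{t+1}},\dots,c^{\delta_r})$ depends only on $\delta_{t+1},\dots,\delta_r$, I find that $(1-c)^{-r}\sum_{\delta}(-c)^{\delta_1+\cdots+\delta_r}\zeta_r(s_1,\dots,s_r;c^{\delta_1},\dots,c^{\delta_r})$ equals $(2\pi i)^{-(r-t)}$ times the integral in (\ref{eqn:4.10}), the $c$-dependent part of whose integrand is the product of $(1-c)^{-t}\sum_{\delta_1,\dots,\delta_t}(-c)^{\delta_1+\cdots+\delta_t}\zeta_t(-z_1,\dots,-z_t;c^{\delta_1},\dots,c^{\delta_t})$ and $(1-c)^{-(r-t)}\sum_{\delta_{t+1},\dots,\delta_r}(-c)^{\delta_{t+1}+\cdots+\delta_r}\zeta_{r-t}(-z_{t+1},\dots,-z_r;c^{\delta_{t+1}},\dots,c^{\delta_r})$.

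Next I would let $c\to1$ inside the integral (\ref{eqn:4.10}). For each fixed $(z_{t+1},\dots,z_r)$ on the vertical lines, Lemma~\ref{lmm:4.1}, applied now in dimensions $t$ and $r-t$ (where it holds for arbitrary complex arguments), shows that the first of the two factors above tends to $\zeta_t^{\rm des}(-z_1,\dots,-z_t)=\zeta_t^{\rm des}(s_1,\dots,s_{t-1},s_t+\sum_{j=t+1}^r(s_j+z_j))$ and the second to $\zeta_{r-t}^{\rm des}(-z_{t+1},\dots,-z_r)$, so the integrand converges pointwise. The majorant constructed in the proof of Lemma~\ref{lmm:4.3} is independent of $c$ (the constant $A$ there being $c$-independent) and is integrable over $(a_{t+1})\times\cdots\times(a_r)$ by Stirling's formula, so the dominated convergence theorem justifies interchanging $\lim_{c\to1}$ with $\int$. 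Combining this with the expression for $\zeta_r^{\rm des}(s_1,\dots,s_r)$ furnished by Lemma~\ref{lmm:4.1} yields the asserted identity.

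The step I expect to be the main obstacle is precisely this interchange of the limit and the integral; it is exactly what Lemma~\ref{lmm:4.3} is tailored to supply, via a $c$-independent majorant, so that once the bookkeeping of the $\delta$-sums and the splitting of $(1-c)^{-r}$ is carried out the proof reduces to a direct assembly of the three lemmas.
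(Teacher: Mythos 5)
Your proposal is correct and follows essentially the same route as the paper's proof: apply Lemma \ref{lmm:4.1}, insert the Mellin--Barnes splitting of Lemma \ref{lmm:4.2} with $\gamma_j=c^{\delta_j}$, and use Lemma \ref{lmm:4.3} together with dominated convergence to pass the limit $c\to1$ through the integral, identifying the two resulting factors via Lemma \ref{lmm:4.1} in dimensions $t$ and $r-t$. The only (immaterial) difference is presentational: the paper first interchanges the limit with the integral and then splits the limit into the two factors, whereas you check the pointwise convergence of the factored integrand first.
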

\begin{proof}
	We set $(-z_1,\dots,-z_t):=\left(s_1,\dots,s_{t-1},s_t+\sum_{j=t+1}^r(s_j+z_j)\right)$ and $\sigma_k:=-\Re(z_k)$ for $1\leq k\leq r$. By using Lemma \ref{lmm:4.1} and the above equation (\ref{eqn:4.3}), we get 
	{\small
	\begin{align}\label{eqn:4.7}
		&\zeta_r^{\rm des}(s_1,\dots,s_r) \\
		=&\lim_{\substack{c\rightarrow1 \\ c\in\mathbb{R}\setminus\{1\}}}\frac{1}{(1-c)^r}\sum_{\delta_1,\dots,\delta_r\in\{0,1\}}(-c)^{\delta_1+\cdots+\delta_r}\zeta_r((s_j);(c^{\delta_j})) \nonumber\\
		=& \lim_{\substack{c\rightarrow1 \\ c\in\mathbb{R}\setminus\{1\}}}\frac{1}{(1-c)^r}\sum_{\delta_1,\dots,\delta_r\in\{0,1\}}(-c)^{\delta_1+\cdots+\delta_r}\left(\frac{1}{2\pi i}\right)^{r-t}\int_{(a_{t+1})\times\cdots\times(a_r)}\prod_{k=t+1}^r\frac{\Gamma(s_k+z_k)\Gamma(-z_k)}{\Gamma(s_k)} \nonumber\\
		&\hspace{0.cm}\cdot \zeta_t\left(-z_1,\dots,-z_t;c^{\delta_1},\dots,c^{\delta_t}\right) 
		\cdot\zeta_{r-t}(-z_{t+1},\dots,-z_{r};c^{\delta_{t+1}},\dots,c^{\delta_r})\prod_{l=t+1}^rdz_l. \nonumber \\
		=& \lim_{\substack{c\rightarrow1 \\ c\in\mathbb{R}\setminus\{1\}}}\left(\frac{1}{2\pi i}\right)^{r-t}\int_{(a_{t+1})\times\cdots\times(a_r)}\prod_{k=t+1}^r\frac{\Gamma(s_k+z_k)\Gamma(-z_k)}{\Gamma(s_k)} \nonumber\\
		&\scalebox{.9}{$\displaystyle \hspace{0.cm}\cdot \frac{1}{(1-c)^r}\sum_{\delta_1,\dots,\delta_r\in\{0,1\}}(-c)^{\delta_1+\cdots+\delta_r}\zeta_t\left(-z_1,\dots,-z_t;c^{\delta_1},\dots,c^{\delta_t}\right) 
		\cdot\zeta_{r-t}(-z_{t+1},\dots,-z_{r};c^{\delta_{t+1}},\dots,c^{\delta_r})\prod_{l=t+1}^rdz_l.$} \nonumber
	\end{align}}
	By Lemma \ref{lmm:4.3} and Lebesgue's dominated convergence theorem, we can commute the limit $\displaystyle\lim_{\substack{c\rightarrow1 \\ c\in\mathbb{R}\setminus\{1\}}}$ with the integral $\displaystyle\int_{(a_{t+1})\times\cdots\times(a_r)}$.
	Therefore we have
	{\small
	\begin{align*}
	&\zeta_r^{\rm des}(s_1,\dots,s_r) \\
	=& \left(\frac{1}{2\pi i}\right)^{r-t}\int_{(a_{t+1})\times\cdots\times(a_r)}\prod_{j=t+1}^r\frac{\Gamma(s_j+z_j)\Gamma(-z_j)}{\Gamma(s_j)} \\
	&\cdot \left\{\lim_{\substack{c\rightarrow1 \\ c\in\mathbb{R}\setminus\{1\}}}\frac{1}{(1-c)^t}\sum_{\delta_1,\dots,\delta_t\in\{0,1\}}(-c)^{\delta_1+\cdots+\delta_t}\zeta_t\left(-z_1,\dots,-z_t;c^{\delta_1},\dots,c^{\delta_t}\right)\right\} \\
	&\cdot\left\{\lim_{\substack{c\rightarrow1 \\ c\in\mathbb{R}\setminus\{1\}}}\frac{1}{(1-c)^{r-t}}\sum_{\delta_{t+1},\dots,\delta_r\in\{0,1\}}(-c)^{\delta_{t+1}+\cdots+\delta_r}\zeta_{r-t}(-z_{t+1},\dots,-z_{r};c^{\delta_{t+1}},\dots,c^{\delta_r})\right\}  \prod_{l=t+1}^rdz_l\\
	=& \left(\frac{1}{2\pi i}\right)^{r-t}\int_{(a_{t+1})\times\cdots\times(a_r)}\prod_{k=t+1}^r\frac{\Gamma(s_k+z_k)\Gamma(-z_k)}{\Gamma(s_k)}  \zeta_t^{\rm des}(-z_1,\dots,-z_t)\zeta_{r-t}^{\rm des}(-z_{t+1},\dots,-z_{r})\prod_{l=t+1}^rdz_l.
\end{align*}}
So we obtain the claim.
\end{proof}

\begin{prp}\label{thm:4.1}
	Let $1\leq t\leq r$. For $s_1,\dots,s_t\in\mathbb{C}$ and $k_{t+1},\dots,k_r\in\mathbb{N}_0$, we have
	\begin{align*}
		&\zeta_r^{\rm des}(s_1,\dots,s_t,-k_{t+1},\dots,-k_r) \\
		&= \sum_{\substack{
			i_b + j_b=k_b \\
			i_b,j_b\geq0 \\
			t+1\leq b\leq r}}
		\prod_{a=t+1}^r\binom{k_a}{i_a} \zeta_{t}^{\rm des}(s_1,\dots,s_{t-1},s_t-i_{t+1}- \cdots -i_r)\zeta_{r-t}^{\rm des}(-j_{t+1},\dots,-j_r).
	\end{align*}
\end{prp}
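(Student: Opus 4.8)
The case $t=r$ is trivial (both sides reduce to $\zeta_r^{\rm des}(s_1,\dots,s_r)$), so assume $1\le t\le r-1$. The plan is to read off Proposition \ref{thm:4.1} from the Mellin--Barnes representation of Proposition \ref{prp:4.1} by letting $s_{t+1},\dots,s_r$ tend to the non-positive integers $-k_{t+1},\dots,-k_r$, and then to remove the auxiliary constraint on $s_1,\dots,s_t$ by analytic continuation.

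First I would fix $s_1,\dots,s_t$ with $\Re(s_j)>1$ and start with $s_{t+1},\dots,s_r$ also satisfying $\Re(s_j)>1$, so that Proposition \ref{prp:4.1} applies with paths $(a_k)$, $-\Re(s_k)<a_k<-1$. For each $k\in\{t+1,\dots,r\}$ in turn I would let $\Re(s_k)$ decrease towards $-k_k$: whenever $\Re(s_k)$ passes below $-a_k-m$ the simple pole of $\Gamma(s_k+z_k)$ at $z_k=-s_k-m$ crosses the path $(a_k)$, and the analytic continuation in $s_k$ of the $z_k$-integral is obtained, by the residue theorem, by adding the residue at that pole and moving the path to the left --- the usual Mellin--Barnes device of \cite{Matsumoto} and \cite{FKMT1}. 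At such a pole one has $\Gamma(-z_k)=\Gamma(s_k+m)$, $\operatorname{Res}_{z_k=-s_k-m}\Gamma(s_k+z_k)=(-1)^m/m!$, the slot $-z_k$ of the inner $\zeta_{r-t}^{\rm des}$ turns into $s_k+m$, and the last argument of $\zeta_t^{\rm des}$ is lowered by $m$; thus this residue contributes the scalar $\tfrac{(-1)^m}{m!}\,\Gamma(s_k+m)/\Gamma(s_k)=\tfrac{(-1)^m}{m!}(s_k)_m$.

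Performing this in all of $z_{t+1},\dots,z_r$ writes $\zeta_r^{\rm des}(s_1,\dots,s_r)$, near $(s_{t+1},\dots,s_r)=(-k_{t+1},\dots,-k_r)$, as a finite sum indexed by the set $S$ of slots on which residues were taken, the slots $k\notin S$ still carrying a shifted path integral with an explicit factor $1/\Gamma(s_k)$. Taking the limit $s_k\to-k_k$ for all $k$, every term with $S\ne\{t+1,\dots,r\}$ drops out: it keeps at least one factor $1/\Gamma(s_k)\to1/\Gamma(-k_k)=0$ while the accompanying integral stays bounded, by an absolute-convergence estimate of exactly the kind proved in Lemma \ref{lmm:4.3} (Stirling's formula $|\Gamma(\sigma+i\tau)|=\sqrt{2\pi}\,|\tau|^{\sigma-1/2}e^{-\pi|\tau|/2}(1+O(|\tau|^{-1}))$ making the integrand dominated uniformly for $s_k$ near $-k_k$). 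So only the full-residue term survives. In it each $z_k$-residue index $m$ runs over $m=0,1,\dots$; but as $s_k\to-k_k$ one has $(s_k)_m\to0$ for $m>k_k$ and $(s_k)_m\to(-1)^m k_k!/(k_k-m)!$ for $0\le m\le k_k$, so only $0\le m\le k_k$ contributes and its scalar collapses to $\tfrac{(-1)^m}{m!}\cdot(-1)^m k_k!/(k_k-m)!=\binom{k_k}{m}$. Writing $i_k:=m$ and $j_k:=k_k-m$, this is precisely the right-hand side of Proposition \ref{thm:4.1}, valid so far under the hypothesis $\Re(s_j)>1$ for $1\le j\le t$.

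Finally, both sides of the identity are entire functions of $(s_1,\dots,s_t)\in\mathbb{C}^t$: the left side because $\zeta_r^{\rm des}$ is entire (Proposition \ref{prp:1.2}), the right side as a finite linear combination of the entire functions $\zeta_t^{\rm des}(s_1,\dots,s_{t-1},s_t-i_{t+1}-\cdots-i_r)$ with constant coefficients; agreeing on the non-empty open region $\{\Re(s_j)>1:1\le j\le t\}$, they agree on all of $\mathbb{C}^t$ by the identity theorem. I expect the genuine work to be the contour-shift step --- verifying that, uniformly for $s_{t+1},\dots,s_r$ in a neighbourhood of $(-k_{t+1},\dots,-k_r)$, the shifted multiple integrals converge absolutely, so that Lebesgue's dominated convergence theorem justifies passing the limits through the integrals, and that the only poles crossed are the $z_k=-s_k-m$; both follow from (a variant of) Lemma \ref{lmm:4.3} together with the proof of Proposition \ref{prp:4.1}, after which the collapse to $\prod_a\binom{k_a}{i_a}$ is purely formal. (As a check, for $r=2$, $t=1$ the recipe gives $\zeta_2^{\rm des}(s_1,-k_2)=\sum_{i+j=k_2}\binom{k_2}{i}\zeta_1^{\rm des}(s_1-i)\zeta_1^{\rm des}(-j)$, which is consistent with Proposition \ref{prop:2.2}.)
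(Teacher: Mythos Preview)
Your approach is correct and closely parallels the paper's, with one technical difference in the contour manipulation. The paper, for each $k=r,r-1,\dots,t+1$, shifts the $z_k$-contour to the \emph{right}, from $(a_k)$ to $(M_k-\varepsilon_k)$, thereby picking up the poles of $\Gamma(-z_k)$ at $z_k=0,1,\dots,M_k-1$; the residue of $\frac{\Gamma(s_k+z_k)\Gamma(-z_k)}{\Gamma(s_k)}$ there is $-\binom{-s_k}{j_k}$, and only \emph{after} this shift does the paper specialize $s_k=-k_k$, $M_k=k_k+1$, at which point the shifted integral dies via $1/\Gamma(-k_k)=0$ and the residue coefficients become $\binom{k_k}{j_k}$ by substitution. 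You instead keep the contour fixed and let $s_k$ move toward $-k_k$, collecting residues at the poles of $\Gamma(s_k+z_k)$ as they sweep across $(a_k)$ from the left; your scalar $\tfrac{(-1)^m}{m!}(s_k)_m$ reaches $\binom{k_k}{m}$ only in the limit, and you must separately observe that $(s_k)_m\to0$ for $m>k_k$ to truncate the residue sum. The two routes are essentially dual (one extracts residues of $\Gamma(-z)$, the other of $\Gamma(s+z)$) and yield the same formula. The paper's version is marginally cleaner --- the residue set $\{0,\dots,k_k\}$ is fixed by the choice of $M_k$ and no limiting argument for the coefficients is needed --- while yours has the conceptual appeal of realizing the specialization $s_k\to-k_k$ directly as an analytic-continuation process. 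One minor slip in your write-up: the phrase ``moving the path to the left'' is misleading, since moving left would only encounter further poles of $\Gamma(s_k+z_k)$; the path effectively stays at $(a_k)$ and it is the poles that migrate rightward across it.
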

\begin{proof}
	Let $s_1,\dots,s_r\in\mathbb{C}$ with $\Re(s_j)>1$ ($1\leq j\leq r$), $1\leq t\leq r-1$ and $a_{t+1},\dots,a_r\in\mathbb{R}$ with $-\Re(s_k)<a_k<-1$ ($t+1\leq k\leq r$). We assume $1\leq t\leq r-1$. To save space, we put
	{\small
	\begin{align*}
		&f(s_1,\dots,s_r;z_{t+1},\dots,z_r) 
		:=\zeta_t^{\rm des}\left(s_1,\dots,s_{t-1},s_t+\sum_{j=t+1}^r(s_j+z_j)\right)\zeta_{r-t}^{\rm des}(-z_{t+1},\dots,-z_{r}).
	\end{align*}}
	By using Proposition \ref{prp:4.1}, we have
	\begin{align*}
		\zeta_r^{\rm des}(s_1,\dots,s_r) 
		=& \left(\frac{1}{2\pi i}\right)^{r-t-1}\int_{(a_{t+1})\times\cdots\times(a_{r-1})}\prod_{j=t+1}^{r-1}\frac{\Gamma(s_j+z_j)\Gamma(-z_j)}{\Gamma(s_j)} \\
		&\cdot \left\{\frac{1}{2\pi i}\int_{(a_r)}\frac{\Gamma(s_r+z_r)\Gamma(-z_r)}{\Gamma(s_r)}f(s_1,\dots,s_r;z_{t+1},\dots,z_r)dz_r\right\}\prod_{l=t+1}^{r-1}dz_l.
	\end{align*}
	For $M_r\in\mathbb{N}$ and sufficiently small $\varepsilon_r>0$, we set $\mathcal{D}_r:=\{z_r\in\mathbb{C}\ |\ a_r<\Re(z_r)<M_r-\varepsilon_r\}$. For $z_r\in\mathcal{D}_r$, we have $\Re(s_r+z_r)>0$ by $-\Re(s_r)<a_r<0$. So singularities of the above integrand, which lie on $\mathcal{D}_r$, are only $z_r=0,1,\dots,M_r-1$. By using the residue theorem, we get
	\begin{align*}
		&\zeta_r^{\rm des}(s_1,\dots,s_r) \\
		&= \left(\frac{1}{2\pi i}\right)^{r-t-1}\int_{(a_{t+1})\times\cdots\times(a_{r-1})}\prod_{j=t+1}^{r-1}\frac{\Gamma(s_j+z_j)\Gamma(-z_j)}{\Gamma(s_j)} \\
		&\hspace{1cm}\cdot \left\{-\sum_{j_r=0}^{M_r-1}{\rm Res}\left[\frac{\Gamma(s_r+z_r)\Gamma(-z_r)}{\Gamma(s_r)}f(s_1,\dots,s_r;z_{t+1},\dots,z_r),z_r=j_r\right]\right. \\
		&\hspace{1cm}\left.+\frac{1}{2\pi i}\int_{(M_r-\varepsilon_r)}\frac{\Gamma(s_r+z_r)\Gamma(-z_r)}{\Gamma(s_r)}f(s_1,\dots,s_r;z_{t+1},\dots,z_r)dz_r\right\}\prod_{l=t+1}^{r-1}dz_l.
	\end{align*}
	(By the same arguments to those of \cite{Matsumoto}, the above second term converge). By using the fact that the residue of gamma function $\Gamma(s)$ at $s=-j$ is $\frac{(-1)^j}{j!}$, we have
	\begin{equation*}
		{\rm Res}\left[\frac{\Gamma(s_r+z_r)\Gamma(-z_r)}{\Gamma(s_r)},z_r=j_r\right]=(s_r+j_r-1)\cdots s_r\cdot\frac{(-1)^{j_r}}{j_r!}=\binom{-s_r}{j_r}.
	\end{equation*}
	So we obtain
	\begin{align*}
		&\zeta_r^{\rm des}(s_1,\dots,s_r) \\
		&= \left(\frac{1}{2\pi i}\right)^{r-t-1}\int_{(a_{t+1})\times\cdots\times(a_{r-1})}\prod_{j=t+1}^{r-1}\frac{\Gamma(s_j+z_j)\Gamma(-z_j)}{\Gamma(s_j)} \\
		&\hspace{1cm}\cdot \left\{\sum_{j_r=0}^{M_r-1}\binom{-s_r}{j_r}f(s_1,\dots,s_r;z_{t+1},\dots,z_{r-1},j_r)\right. \\
		&\hspace{1cm}\left.+\frac{1}{2\pi i\Gamma(s_r)}\int_{(M_r-\varepsilon_r)}\Gamma(s_r+z_r)\Gamma(-z_r)f(s_1,\dots,s_r;z_{t+1},\dots,z_r)dz_r\right\}\prod_{l=t+1}^{r-1}dz_l.
	\end{align*}
	By setting $s_r=-k_r$ and $M_r=k_r+1$ for $k_r\in\mathbb{N}_0$, because of $\frac{1}{\Gamma(-k_r)}=0$, we get
	{\small
	\begin{align*}
		\zeta_r^{\rm des}(s_1,\dots,s_{r-1},-k_r) 
		=& \left(\frac{1}{2\pi i}\right)^{r-t-1}\int_{(a_{t+1})\times\cdots\times(a_{r-1})}\prod_{j=t+1}^{r-1}\frac{\Gamma(s_j+z_j)\Gamma(-z_j)}{\Gamma(s_j)} \\
		&\cdot \left\{\sum_{j_r=0}^{k_r}\binom{k_r}{j_r}f(s_1,\dots,s_{r-1},-k_r;z_{t+1},\dots,z_{r-1},j_r)\right\}\prod_{l=t+1}^{r-1}dz_l.
	\end{align*}}
	In the same way, we have
	\begin{align*}
		&\zeta_r^{\rm des}(s_1,\dots,s_{r-2},-k_{r-1},-k_r) \\
		&= \left(\frac{1}{2\pi i}\right)^{r-t-2}\int_{(a_{t+1})\times\cdots\times(a_{r-2})}\prod_{j=t+1}^{r-2}\frac{\Gamma(s_j+z_j)\Gamma(-z_j)}{\Gamma(s_j)} \\
		&\scalebox{0.9}{$\displaystyle\cdot \left\{\sum_{j_r=0}^{k_r}\sum_{j_{r-1}=0}^{k_{r-1}}\binom{k_r}{j_r}\binom{k_{r-1}}{j_{r-1}}f(s_1,\dots,s_{r-2},-k_{r-1},-k_r;z_{t+1},\dots,z_{r-2},j_{r-1},j_r)\right\}\prod_{l=t+1}^{r-2}dz_l.$}
	\end{align*}
	By repeating the above computation, we get
	\begin{align*}
		&\zeta_r^{\rm des}(s_1,\dots,s_{t+1},-k_{t+2},\dots,-k_r) \\
		&= \frac{1}{2\pi i}\int_{(a_{t+1})}\frac{\Gamma(s_{t+1}+z_{t+1})\Gamma(-z_{t+1})}{\Gamma(s_{t+1})} \\
		&\scalebox{0.9}{$\displaystyle\cdot \left\{\sum_{j_r=0}^{k_r}\cdots\sum_{j_{t+2}=0}^{k_{t+2}}\binom{k_r}{j_r}\cdots\binom{k_{t+2}}{j_{t+2}}f(s_1,\dots,s_{t+1},-k_{t+2},\dots,-k_r;z_{t+1},j_{t+2},\dots,j_r)\right\}dz_{t+1}.$}
	\end{align*}
	By carrying out the above computation again, lastly we obtain
	\begin{align*}
		&\zeta_r^{\rm des}(s_1,\dots,s_t,-k_{t+1},\dots,-k_r) \\
		&=\sum_{j_r=0}^{k_r}\cdots\sum_{j_{t+1}=0}^{k_{t+1}}\binom{k_r}{j_r}\cdots\binom{k_{t+1}}{j_{t+1}}f(s_1,\dots,s_t,-k_{t+1},\dots,-k_r;j_{t+1},\dots,j_r).
	\end{align*}
	Therefore, we get the proposition for $(s_1,\dots,s_r)\in\mathbb{C}^r$ with $\Re(s_j)>1$.
	Because the function $\zeta_r^{\rm des}(s_1,\dots,s_r)$ is analytic on $\mathbb{C}^r$ we get the claim for $(s_1,\dots,s_r)\in\mathbb{C}^r$.
\end{proof}
\begin{lmm}\label{lem:4.2}
	Let $f,g:\mathbb{C}\times\mathbb{Z}^q\rightarrow\mathbb{C}$ be maps $(q\in\mathbb{N})$. We assume that
	\begin{equation}\label{eqn:4.4}
		g(s;-l_1,\dots,-l_q)=\sum_{\substack{
			i_b + j_b=l_b \\
			i_b,j_b\geq0 \\
			1\leq b \leq q}}
		\left\{\prod_{a=1}^q\binom{l_a}{i_a}\right\}
		\cdot f(s-i_1-\cdots-i_q;-j_1,\dots,-j_q)
	\end{equation}
	for $s\in\mathbb{C}$ and $l_1,\dots,l_q\in\mathbb{N}_0$. Then we have
	\begin{equation}\label{eqn:4.5}
		f(s;-l_1,\dots,-l_q)=\sum_{\substack{
			i_b + j_b=l_b \\
			i_b,j_b\geq0 \\
			1\leq b \leq q}}
		\left\{\prod_{a=1}^q(-1)^{i_a}\binom{l_a}{i_a}\right\}
		\cdot g(s-i_1-\cdots-i_q;-j_1,\dots,-j_q)
	\end{equation}
	for $s\in\mathbb{C}$ and $l_1,\dots,l_q\in\mathbb{N}_0$.
\end{lmm}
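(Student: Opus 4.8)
The plan is to deduce (\ref{eqn:4.5}) from the hypothesis (\ref{eqn:4.4}) by a single direct substitution, with no analytic input. Insert the expression (\ref{eqn:4.4}) for $g$ into the right-hand side of (\ref{eqn:4.5}) and show that all contributions cancel except the one equal to $f(s;-l_1,\dots,-l_q)$. Write $\mathbf{i}=(i_1,\dots,i_q)$, $|\mathbf{i}|=i_1+\cdots+i_q$, and likewise for $\mathbf{i}',\mathbf{j},\mathbf{j}'$. Applying (\ref{eqn:4.4}) with $s$ replaced by $s-|\mathbf{i}|$ and $(l_1,\dots,l_q)$ replaced by $(j_1,\dots,j_q)$ — legitimate since each $j_b\in\mathbb{N}_0$ and all sums here are finite — the right-hand side of (\ref{eqn:4.5}) becomes
\[
\sum_{\mathbf{i},\mathbf{i}'}\ \prod_{a=1}^q(-1)^{i_a}\binom{l_a}{i_a}\binom{j_a}{i'_a}\, f\!\left(s-|\mathbf{i}|-|\mathbf{i}'|;\,-j'_1,\dots,-j'_q\right),
\]
where $j_a=l_a-i_a$ and $j'_a=j_a-i'_a=l_a-i_a-i'_a$, the indices running over $i_a,i'_a\geq0$ with $i_a+i'_a\leq l_a$.

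Next I would regroup the terms according to the tuple $\mathbf{j}'$. For fixed $\mathbf{j}'$ set $n_a:=l_a-j'_a\geq0$; then the constraint becomes simply $i_a+i'_a=n_a$, so $i'_a=n_a-i_a$ and, crucially, $|\mathbf{i}|+|\mathbf{i}'|=\sum_{a=1}^q n_a$ depends on $\mathbf{j}'$ alone. Interchanging the (finite) sums, the coefficient of $f\bigl(s-\sum_a n_a;-j'_1,\dots,-j'_q\bigr)$ factors as $\prod_{a=1}^q c_a$, where $c_a=\sum_{i=0}^{n_a}(-1)^i\binom{l_a}{i}\binom{l_a-i}{n_a-i}$. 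Using the elementary identity $\binom{l_a}{i}\binom{l_a-i}{n_a-i}=\binom{l_a}{n_a}\binom{n_a}{i}$ one gets $c_a=\binom{l_a}{n_a}\sum_{i=0}^{n_a}(-1)^i\binom{n_a}{i}=\binom{l_a}{n_a}(1-1)^{n_a}$, which equals $1$ for $n_a=0$ and $0$ for $n_a\geq1$. Hence only the tuple $\mathbf{j}'=(l_1,\dots,l_q)$ survives; for it $\sum_a n_a=0$ and the coefficient is $\prod_a1=1$, so the right-hand side of (\ref{eqn:4.5}) collapses to $f(s;-l_1,\dots,-l_q)$, which is exactly its left-hand side.

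I do not expect any serious obstacle: the entire content is the bookkeeping of the nested binomial sums. The one place needing care is the re-indexing of $(\mathbf{i},\mathbf{i}',\mathbf{j})$ through $\mathbf{j}'$ and the numbers $n_a=l_a-j'_a$, together with the observation that $|\mathbf{i}|+|\mathbf{i}'|$ is a function of $\mathbf{j}'$ alone; this is what allows the double sum to split as the product $\prod_a c_a$ of one-dimensional alternating binomial sums, each of which vanishes unless $n_a=0$. An alternative would be to induct on $q$, peeling off one coordinate at a time and invoking the classical single-variable binomial inversion with the $s$-shift carried along as an inert parameter; this also works, but is more cumbersome than the direct computation above, so I would present the direct argument.
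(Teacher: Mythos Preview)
Your argument is correct. The substitution of (\ref{eqn:4.4}) into the right-hand side of (\ref{eqn:4.5}), the reindexing through $n_a=l_a-j'_a$, the trinomial revision $\binom{l_a}{i}\binom{l_a-i}{n_a-i}=\binom{l_a}{n_a}\binom{n_a}{i}$, and the collapse via $(1-1)^{n_a}$ are all sound, and your remark that every $j_b\in\mathbb{N}_0$ so that (\ref{eqn:4.4}) may legitimately be re-applied covers the only point needing justification.

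The paper, however, takes the alternative route you mention at the end: it first proves the $q=1$ case by induction on $l_1$, isolating the top term of (\ref{eqn:4.4}) and feeding the inductive hypothesis into the remaining terms, then reduces general $q$ to the $q=1$ case by successively inverting one coordinate at a time. Your direct computation is shorter and conceptually cleaner --- it handles all coordinates simultaneously and makes the underlying M\"obius-type inversion visible as a product of vanishing alternating sums --- at the cost of relying on the identity $\binom{l}{i}\binom{l-i}{n-i}=\binom{l}{n}\binom{n}{i}$, which the paper's inductive approach avoids. The paper's method is more pedestrian but entirely self-contained; yours is the standard combinatorial proof of binomial inversion, generalised to $q$ variables with the shift in $s$ carried along as a passive parameter.
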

\begin{proof}
	Firstly, we prove this claim in the case of $q=1$ by induction on $l_1$. The case of $l_1=0$ is obvious. We assume the equation (\ref{eqn:4.5}) for $q=1$ and  $l_1\leq l-1$\quad ($l\in\mathbb{N}$). When $l_1=l$, from the equation (\ref{eqn:4.4}), we have
	\begin{align*}
		f(s;-l)=&g(s;-l)-\sum_{j=0}^{l-1}\binom{l}{j}f(s-l+j;-j).
		\intertext{By using the equation (\ref{eqn:4.5}), we get}
		=&g(s;-l)-\sum_{j=0}^{l-1}\binom{l_0}{j}\left\{\sum_{k=0}^j(-1)^k\binom{j}{k}g(s-l+j-k;-j+k)\right\} \\
		=&g(s;-l)-\sum_{j=0}^{l-1}\sum_{k=0}^j(-1)^k\binom{l}{j}\binom{j}{k}g(s-l+j-k;-j+k).
		\intertext{By putting $i=j-k$\quad($0\leq i\leq l-1$), we have}
		=&g(s;-l)-\sum_{i=0}^{l-1}\sum_{j=i}^{l-1}(-1)^{j-i}\binom{l}{j}\binom{j}{j-i}g(s-l+i;-i) \\
		=&g(s;-l)-\sum_{i=0}^{l-1}\left\{\sum_{j=i}^{l}(-1)^{j-i}\binom{l}{j}\binom{j}{i}-(-1)^{l-i}\binom{l}{i}\right\}g(s-l+i;-i) \\
		=&g(s;-l)+\sum_{i=0}^{l-1}(-1)^{l-i}\binom{l}{i}g(s-l+i;-i) \\
		=&\sum_{i=0}^{l}(-1)^{l-i}\binom{l}{i}g(s-l+i;-i).
	\end{align*}
	Secondly, we prove the claim for $q\geq2$. From the equation (\ref{eqn:4.4}), we have
	{\small
	\begin{align*}
		&g(s;-l_1,\dots,-l_q) \\
		&=\sum_{i_1 + j_1=l_1}\binom{l_1}{i_1}\left[\sum_{i_2 + j_2=l_2}\binom{l_2}{i_2}\left[\cdots\left[\sum_{i_q + j_q=l_q}\binom{l_q}{i_q}
		 f(s-i_1-\cdots-i_q;-j_1,\dots,-j_q)\right]\cdots\right]\right].
	\end{align*}}
	By using Lemma \ref{lem:4.2} as $q=1$, we get
	{\small
	\begin{align*}
		&\sum_{i_1 + j_1=l_1}(-1)^{i_1}\binom{l_1}{i_1}g(s-i_1;-j_1,-l_2\dots,-l_q) \\
		&=\sum_{i_2 + j_2=l_2}\binom{l_2}{i_2}\left[\cdots\left[\sum_{i_q + j_q=l_q}\binom{l_q}{i_q}
		 f(s-i_2-\cdots-i_q;-l_1,-j_2,\dots,-j_q)\right]\cdots\right].
	\end{align*}}
	By using Lemma \ref{lem:4.2} as $q=1$ again, we have
	{\small
	\begin{align*}
		&\sum_{i_2 + j_2=l_2}(-1)^{i_2}\binom{l_2}{i_2}\left[\sum_{i_1 + j_1=l_1}(-1)^{i_1}\binom{l_1}{i_1}g(s-i_1-i_2;-j_1,-j_2,-l_3\dots,-l_q)\right] \\
		&=\sum_{i_3 + j_3=l_3}\binom{l_3}{i_3}\left[\cdots\left[\sum_{i_q + j_q=l_q}\binom{l_q}{i_q}
		 f(s-i_3-\cdots-i_q;-l_1,-l_2-j_3,\dots,-j_q)\right]\cdots\right].
	\end{align*}}
	Therefore, by using Lemma \ref{lem:4.2} repeatedly, we obtain the claim.
\end{proof}
By Proposition \ref{thm:4.1} and Lemma \ref{lem:4.2}, we obtain the following theorem.
\begin{thm}\label{cor:4.1}
	For $s_1,\dots,s_p\in\mathbb{C}$ and $l_1,\dots,l_q\in\mathbb{N}_0$, we have
	\begin{align}\label{eqn:4.6}
		&\zeta_p^{\rm des}(s_1,\dots,s_p)\zeta_q^{\rm des}(-l_1,\dots,-l_q) \\
		&= \sum_{\substack{
			i_b + j_b=l_b \\
			i_b,j_b\geq0 \\
			1\leq b \leq q}}
		\prod_{a=1}^q(-1)^{i_a}\binom{l_a}{i_a} \zeta_{p+q}^{\rm des}(s_1,\dots,s_{p-1},s_p-i_1- \cdots -i_q,-j_1,\dots,-j_q). \nonumber
	\end{align}
\end{thm}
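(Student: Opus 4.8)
The plan is to derive Theorem~\ref{cor:4.1} as a purely formal corollary of Proposition~\ref{thm:4.1} together with the binomial‑inversion Lemma~\ref{lem:4.2}; no further analytic input is needed.

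First I would apply Proposition~\ref{thm:4.1} with $r=p+q$ and $t=p$. Since $q\geq1$ we have $1\leq p\leq p+q-1$, so the hypothesis is met, and for $s_1,\dots,s_p\in\mathbb{C}$ and $k_{p+1},\dots,k_{p+q}\in\mathbb{N}_0$ the proposition gives
\begin{align*}
	&\zeta_{p+q}^{\rm des}(s_1,\dots,s_p,-k_{p+1},\dots,-k_{p+q}) \\
	&= \sum_{\substack{i_b+j_b=k_b\\ i_b,j_b\geq0\\ p+1\leq b\leq p+q}}\ \prod_{a=p+1}^{p+q}\binom{k_a}{i_a}\,\zeta_p^{\rm des}\!\big(s_1,\dots,s_{p-1},s_p-i_{p+1}-\cdots-i_{p+q}\big)\,\zeta_q^{\rm des}(-j_{p+1},\dots,-j_{p+q}).
\end{align*}
Relabelling the last block of indices by $l_c:=k_{p+c}$ for $1\leq c\leq q$ (and likewise the $i$'s and $j$'s), treating $s_1,\dots,s_{p-1}$ as fixed, and defining $f,g\colon\mathbb{C}\times\mathbb{Z}^q\to\mathbb{C}$ by
\begin{align*}
	f(s;n_1,\dots,n_q)&:=\zeta_p^{\rm des}(s_1,\dots,s_{p-1},s)\,\zeta_q^{\rm des}(n_1,\dots,n_q),\\
	g(s;n_1,\dots,n_q)&:=\zeta_{p+q}^{\rm des}(s_1,\dots,s_{p-1},s,n_1,\dots,n_q)
\end{align*}
(both well defined on all of $\mathbb{C}\times\mathbb{Z}^q$, since $\zeta_q^{\rm des}$ and $\zeta_{p+q}^{\rm des}$ are entire), the displayed identity is, after putting $s=s_p$, exactly hypothesis (\ref{eqn:4.4}) of Lemma~\ref{lem:4.2}.

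The conclusion (\ref{eqn:4.5}) of Lemma~\ref{lem:4.2} then reads
\begin{align*}
	&\zeta_p^{\rm des}(s_1,\dots,s_{p-1},s)\,\zeta_q^{\rm des}(-l_1,\dots,-l_q) \\
	&= \sum_{\substack{i_b+j_b=l_b\\ i_b,j_b\geq0\\ 1\leq b\leq q}}\ \prod_{a=1}^q(-1)^{i_a}\binom{l_a}{i_a}\,\zeta_{p+q}^{\rm des}\!\big(s_1,\dots,s_{p-1},s-i_1-\cdots-i_q,-j_1,\dots,-j_q\big)
\end{align*}
for every $s\in\mathbb{C}$, and specializing $s=s_p$ yields precisely (\ref{eqn:4.6}).

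Since the substantive work has already been carried out upstream — the Mellin--Barnes argument of Lemmas~\ref{lmm:4.1}--\ref{lmm:4.3} and Proposition~\ref{prp:4.1} feeding Proposition~\ref{thm:4.1}, whose proof also removes the auxiliary restriction $\Re(s_j)>1$ — I do not expect a genuine obstacle at this last step. The only thing demanding care is the index bookkeeping in the identification of the two binomial sums: one must check that the summation constraints $i_b+j_b=l_b$, the product ranges, and the shift $s_p\mapsto s_p-i_1-\cdots-i_q$ in Proposition~\ref{thm:4.1} match verbatim with those appearing in the hypothesis of Lemma~\ref{lem:4.2}, so that the inversion reproduces the sign factors $(-1)^{i_a}$ exactly as written in (\ref{eqn:4.6}). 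As a consistency check one may note that taking $s_1=-k_1,\dots,s_p=-k_p$ recovers Theorem~\ref{prop:2.1} and taking $q=1$ recovers Proposition~\ref{prop:2.2}.
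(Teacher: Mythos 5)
Your argument is correct and coincides with the paper's own proof: specialize Proposition~\ref{thm:4.1} to $r=p+q$, $t=p$, $(k_{t+1},\dots,k_r)=(l_1,\dots,l_q)$, and then invert via Lemma~\ref{lem:4.2} with exactly the same choices of $f$ and $g$. Nothing further is needed.
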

\begin{proof}
	By putting $r=p+q$, $t=p$ and $(k_{t+1},\dots,k_r):=(l_1,\dots,l_q)$ in Proposition \ref{thm:4.1}, we have
	\begin{align*}
		&\zeta_{p+q}^{\rm des}(s_1,\dots,s_p,-l_1,\dots,-l_q) \\
		&= \sum_{\substack{
			i_b + j_b=l_b \\
			i_b,j_b\geq0 \\
			1\leq b \leq q}}
		\prod_{a=1}^q\binom{l_a}{i_a} \zeta_{p}^{\rm des}(s_1,\dots,s_{p-1},s_p-i_1- \cdots -i_q)\zeta_{q}^{\rm des}(-j_1,\dots,-j_q).
	\end{align*}
	By applying Lemma \ref{lem:4.2} to the above equation with
	\begin{align*}
		g(s;-l_1,\dots,-l_q)&=\zeta_{p+q}^{\rm des}(s_1,\dots,s_{p-1},s,-l_1,\dots,-l_q), \\
		f(s;-l_1,\dots,-l_q)&=\zeta_p^{\rm des}(s_1,\dots,s_{p-1},s)\zeta_q^{\rm des}(-l_1,\dots,-l_q),
	\end{align*}
	we get the theorem.
\end{proof}

\thanks{ {\it Acknowledgements}. The author is cordially grateful to Professor H. Furusho for guiding him towards this topic and for giving  useful suggestions to him. He greatly appreciates K. Matsumoto, H. Tsumura and the referee of his previous paper \cite{Komi2} who gave him numerous and variable comments. This work was supported by JSPS JP18J14774 and the Research Institute for Mathematical Sciences, an International Joint Usage/Research Center located in Kyoto University.}


\end{document}